\newtheorem{teo}{Theorem}
\newtheorem{defi}{Definition}
\newtheorem{prop}{Proposition}
\newtheorem{cor}{Corolary}
\newenvironment{proof}{\noindent{\textbf{Proof}}}{\begin{flushright}
$\blacksquare$
\end{flushright}}
\newenvironment{teorem}[1][Theorem]{\begin{trivlist}
\item[\hskip \labelsep {\bfseries #1}]}{\end{trivlist}}
\title{{Rosenthal compacta that are premetric of finite degree}}
\author{Antonio Avil\' es, Alejandro Poveda and Stevo Todorcevic}
\date{}
\begin{document}
\maketitle
\begin{abstract}
We show that if a separable Rosenthal compactum $K$ is a continuous $n$-to-one preimage of a metric compactum, but it is not a continuous $n-1$-to-one preimage, then $K$ contains a closed subset homeomorphic to either the $n-$Split interval $S_n(I)$ or the Alexandroff $n-$plicate $D_n(2^\mathbb{N})$. This generalizes a result of the third author that corresponds to the case $n=2$.
\end{abstract}
\section{Introduction}
A compact space $K$ is a Rosenthal compactum if it is homeomorphic to a compact subset of $\boldsymbol{\mathcal{B}_1}(X)$, the space of real-valued functions of the first Baire class on a Polish space $X$ endowed with the topology of pointwise convergence. This is a well studied class originated in relation with the study of separable Banach spaces without copies of $\ell^1$ \cite{Ros} \cite{BFT} \cite{God} \cite{HMO}.
\newline

In the paper \cite{Tod}, three critical Rosenthal compacta are identified: The Split interval $S(I)$, the Alexandroff duplicate of the Cantor set $D(2^\mathbb{N})$, and the one-point compactification of a discrete set of size continuum $A(D)$. The definition of $S(I)$ and $D(2^\mathbb{N})$ are recalled in Section \ref{sec::section3}. One key property of these two compact spaces is that they are \textit{premetric compacta of degree at most two}. A compact space $K$ is a premetric compactum of degree at most two if there exists a continuous surjection $f:K\longrightarrow M$ onto a metric compactum $M$ such that $\vert f^{-1}(x)\vert\leq 2$ for all $x\in M$. It is proven in \cite{Tod} that a separable Rosenthal compactum which does not contain discrete subspaces of size continuum must be premetric of degree at most two. Another result is the following:
\begin{teorem}
\textbf{(S. Todorcevic)}
\textit{
If a separable Rosenthal compactum $K$ is a premetric compactum of degree at most two, then at least one of the following alternatives holds
\begin{enumerate}
\item $K$ is metric.
\item $K$ contains a homeomorphic copy of $S(I)$.
\item $K$ contains a homeomorphic copy of $D(2^\mathbb{N})$.
\end{enumerate}
}
\end{teorem}
If we are given a natural number $n$, we can say, more generally, that a compact space $K$ is a premetric compactum of degree at most $n$ if there exists a continuous surjection $f: K\longrightarrow M$ onto a metric compactum such that $\vert f^{-1}(x)\vert \leq n$ for all $x\in M$. For $n=1$ we get the class of metric compacta. In this work, we introduce $n-$ dimensional versions $S_n(I)$ and $D_n(2^\mathbb{N})$ of the Split interval and the Alexandroff duplicate, and prove the following generalization of the previous theorem:
\begin{teorem}
\textit{
Fix a natural number $n\geq 2$. If a separable Rosenthal compactum $K$ is a premetric compactum of degree at most $n$, then one of the following alternatives holds:
\begin{enumerate}
\item $K$ is a premetric compactum of degree at most $n-1$.
\item $K$ contains a homeomorphic copy of $S_n(I)$.
\item $K$ contains a homeomorphic copy of $D_n(2^\mathbb{N})$.
\end{enumerate}
}
\end{teorem}
Section \ref{sec::section2} contains some preliminary results, in Section \ref{sec::section3} we introduce the spaces $S_n(I)$ and $D_n(2^\mathbb{N})$, and Section \ref{sec::section4} contains the proof of the main result, that mimics \cite[Section E]{Tod} with some adaptations needed for the new multidimensional setting.

\section{Preliminaries}\label{sec::section2}
In this section we recall some results on descriptive set theory, general topology and Ramsey theory that we need throughout the paper. Fore further details we refer to \cite{Kec}, \cite{Deb}, \cite{Eng} and \cite{TodRam}.

We denote by $\mathbb{N}$ the set of natural numbers and identify each natural number $n$ with the set of its predecessors $\{0,\dots, n-1\}$. Given a nonempty set $X$ let us consider $X^{<\mathbb{N}}$ the set of finite sequences on $X$.
For every finite sequence $s=(s_0,\dots, s_{n-1})$ we denote by $length(s)$ the natural number $n$, the domain of $s$. If $n\leq length(s)$ by $s\upharpoonright n$ we will refer to the sequence $(s_0,\dots, s_{n-1})$, the restriction of $s$ to its first $n$ coordinates. Given $s,t\in X^{<\mathbb{N}}$ we say that $s$ is an initial segment of $t$ or $t$ is an extension of $s$ ($s\preceq t$) if there exists a natural number $m\leq length(t)$ such that $s=t\upharpoonright m$. If $length(s)<length(t)$ we say that this extension is proper. Otherwise, if $s\not\preceq t$ and $t\not\preceq s$  then we say that $s$ and $t$ are incomparable.
Given two sequences $s=(s_0,\dots, s_{n-1})$ and $t=(t_0,\dots, t_{k-1})$ we denote by $s\frown t$ the sequence $(s_0,\dots, s_{n-1},t_0,\dots, t_{k-1})$.

We say that a subset $T$ of $X^{<\mathbb{N}}$ is a tree on $X$ if it is nonempty and closed under initial segments. That is, if given $s\in T$ and $t\in X^{<\mathbb{N}}$ with $t\preceq s$ then $t\in T$. We will refer to the elements of $T$ as nodes of the tree $T$.

We say that a sequence $x\in X^{\mathbb{N}}$ is a branch of the tree $T$ if $x\upharpoonright n\in T$ for every $n\in\mathbb{N}$. When a given node $s\in T$ is a initial segment of a branch $x$ we write $s\subset x$ or $x\in [s]$. Finally, by $[T]$ we denote the set of all branches of $T$.

Given two nonempty sets $X,Y$ there is a natural way to build a tree $T$ on the product $X\times Y$. In such case, we restrict ourselves to consider nodes  of the form $s = (t,u)\in T$ such that $t\in X^{<\mathbb{N}}$, $u\in Y^{<\mathbb{N}}$ and $length(t)=length(u)$. A a couple $(t,u)$ will be an extension of another $(t',u')$ if and only if $t'$ and $u'$ are initial segments of $t$ and $u$ respectively. Finally, it is easy to prove that the set of all branches of the tree $T$ is
$$[T]=\{(u,t)\in X^{\mathbb{N}}\times Y^{\mathbb{N}}:\,(u\upharpoonright n,\,t\upharpoonright n)\in T,\; \forall n\in \mathbb{N}\}.$$
Recall that every product $X^\mathbb{N}$ can be endowed in a natural way with the product topology taking the discrete topology as the topology of the set $X$. An open basis of the topology of $X^\mathbb{N}$ is given by the sets $\{x\in X^\mathbb{N}: s\subset x\}$ where $s\in X^{<\mathbb{N}}$.

Remember that a topological space is said to be Polish if it is separable and completely metrizable; that is, there exists a complete metric compatible with its topology. If $X$ is a countable set then $X^{\mathbb{N}}$ is a Polish space endowed with its product topology. Two significant examples of that kind of Polish spaces are the Cantor space $2^\mathbb{N}$ and the Baire space $\mathbb{N}^\mathbb{N}$. It is well known, for example, that every perfect Polish space contains copies of $2^\mathbb{N}$ and hence it has cardinality continuum. 
The following notion is often used when building copies of $2^\mathbb{N}$ inside another space:
\begin{defi}[Cantor Scheme]\label{esquemadecantor}
Given a set $X$ we say that a family $\{\mathcal{A}_s\}_{s\in 2^{<\mathbb{N}}}$ of subsets of $X$ is a Cantor scheme over $X$ if it the following conditions are satisfied:
\begin{enumerate}
\item $\mathcal{A}_{s\frown 0}\cap \mathcal{A}_{s\frown 1}=\emptyset$.
\item $\mathcal{A}_{s\frown i}\subseteq \mathcal{A}_{s}$ for all $s\in 2^{<\mathbb{N}},\, i\in\{0,1\}$.
\end{enumerate} 
\end{defi}

Given any totally ordered set $(X,\preceq)$ we can endow $X$ with the topology generated by its $\prec-$rays $\{x\in X: x\prec x_0\}$ and $\{x\in X:x\succ x_0\}$ where $x_0\in X$. This topology is called \textit{the order topology induced by $\preceq$}. Recall that given two partially ordered sets $(X,\leq)$ and $(Y,\preceq)$ the lexicographical order on $X\times Y$ is defined by the following condition
$$(x_1,y_1)\leq_{lex}(x_2,y_2)\;\;\Longleftrightarrow\;\;\text{if}\;\; (x_1 < x_2)\;\;\text{or}\;\; (x_1=x_2,\;\;\text{and}\;\; y_1\preceq y_2).$$
A special property shared by every space whose topology arises from an order is the following
\begin{teo}\label{hereditarily}
For a topological space $X$ endowed with an order topology it is equivalent to be separable and hereditarily separable.
\end{teo}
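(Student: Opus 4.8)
The plan is to prove the nontrivial implication only, since any space is a subspace of itself, so hereditary separability immediately gives separability. Assume then that $X$ carries the order topology of a total order $\preceq$ and is separable, fix a countable dense set $D\subseteq X$, and let $Y\subseteq X$ be an arbitrary subspace; I want to manufacture a countable dense subset of $Y$.

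The combinatorial heart will be an auxiliary lemma: every subspace $Z\subseteq X$ that is discrete in its subspace topology is countable. To prove it, for each $z\in Z$ fix an open interval (or ray) $U_z=(a_z,b_z)$ with $U_z\cap Z=\{z\}$, and partition $Z$ according to whether $(a_z,z)$ and $(z,b_z)$ are empty. If both are nonempty, density of $D$ lets me choose $p_z\in D\cap(a_z,z)$ and $q_z\in D\cap(z,b_z)$, so that $(p_z,q_z)\cap Z=\{z\}$ and $z\mapsto(p_z,q_z)$ injects this part of $Z$ into $D\times D$. If exactly one side is empty, say $(a_z,z)=\emptyset$, then $U_z=[z,b_z)$ and $(z,b_z)\cap Z=\emptyset$; a short order computation shows that the nonempty intervals $(z,b_z)$ are pairwise disjoint as $z$ ranges over this piece, so choosing one point of $D$ in each injects it into $D$. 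Finally, if both sides are empty then $\{z\}=U_z$ is open, so $z$ is isolated in $X$ and hence lies in $D$. In every case the relevant piece of $Z$ is countable, so $Z$ is countable.

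For the given subspace $Y$, I will choose, for every pair $a\prec b$ in $D$ (and for the analogous rays) with $(a,b)\cap Y\neq\emptyset$, a single representative $y_{a,b}\in(a,b)\cap Y$, and let $E$ be the countable set of all these representatives. The key claim is that $Y\setminus\overline{E}$ is discrete, hence countable by the lemma, which makes $E\cup(Y\setminus\overline{E})$ a countable dense subset of $Y$. To prove the claim I show that any $y\in Y$ which is \emph{not} isolated in $Y$ lies in $\overline{E}$. Given a basic neighbourhood $U=(c,d)$ of $y$, observe that if some $a,b\in D\cap U$ have $(a,b)\cap Y\neq\emptyset$, then $y_{a,b}\in(a,b)\cap Y\subseteq U\cap E$, so $U\cap E\neq\emptyset$; thus it suffices to find such a pair. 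Since $y$ is not isolated, points of $Y$ accumulate at $y$ from at least one side within $U$, and trapping one of them between two elements of $D\cap U$ (which exist by density, as long as no jump intervenes) produces a representative of $E$ inside $U$.

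The main obstacle is precisely the presence of jumps: when $(c,y)$ or $(y,d)$ is empty, $D$ cannot separate $y$ from its immediate neighbour, and the naive trapping breaks down. This is what forces the case division in the lemma and the remark that a point with empty intervals on both sides is isolated in $X$ and therefore already belongs to $D$. I expect the delicate bookkeeping to lie in showing that, when one side of $y$ is a jump, non-isolation of $y$ still forces a genuine accumulation of $Y$ on the other side, which can then be enclosed by two points of $D\cap U$; here density of $D$ and the order structure must be combined most carefully, and a secondary empty-interval would again contradict the accumulation, closing the argument.
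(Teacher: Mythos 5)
Your proof is correct, but it takes a more elaborate route than the paper, whose entire argument is a single sentence: for every interval $I$ with extremes in the countable dense set $D$ (open or closed at either end), pick a representative $a(I)\in A\cap I$ whenever this intersection is nonempty, and assert that these representatives form a countable dense subset of the subspace $A$. You share the same germ (your set $E$ of representatives $y_{a,b}$), but you add two ingredients the paper does not have: the lemma that every discrete subspace of a separable ordered space is countable, and the decomposition $Y=(\overline{E}\cap Y)\cup(Y\setminus\overline{E})$, which makes the representatives responsible only for the non-isolated points of $Y$. This extra machinery is not superfluous; it repairs a step that the paper's one-liner glosses over. Isolated points of $A$ produced by jumps whose endpoints avoid $D$ need not be captured by any family of representatives: in the split interval with $D=(\mathbb{Q}\cap(0,1))\times\{0,1\}$, take $x_0$ irrational and $A=\{(t,i):t<x_0\}\cup\{(x_0,1)\}$; then $(x_0,1)$ is isolated in $A$, yet every $D$-interval containing $(x_0,1)$ also meets $A\setminus\{(x_0,1)\}$, so a badly chosen $a(\cdot)$ never selects $(x_0,1)$ and its range fails to be dense in $A$. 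Your discrete-subspace lemma is exactly what absorbs such points, and your jump analysis handles the remaining ones; moreover, the step you flag as delicate does close. Indeed, if $U=(c,d)\ni y$ with $(c,y)=\emptyset$ and $y$ is non-isolated in $Y$, pick $b\in D\cap(y,d)$ (this interval is nonempty, since otherwise $\{y\}=U$ would be open and $y$ isolated), then pick $y''\in Y\cap(c,b)$ with $y''\neq y$, so that $y\prec y''\prec b$; either $(y,y'')\neq\emptyset$, and any $a\in D\cap(y,y'')$ gives $y''\in(a,b)\cap Y$ with $(a,b)\subseteq U$, or $(y,y'')=\emptyset$, and then $\{y\}=(c,y'')$ is open, contradicting non-isolation. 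In summary: the paper's proof is shorter but, as literally written, incomplete at isolated points of the subspace; yours is longer, airtight, and yields as a by-product the independently useful fact that a separable ordered space has no uncountable discrete subspace.
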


\begin{proof} If $D$ is a countable dense subset of $X$, and $A$ is a subset of $X$, then for every interval $I$ (open or closed on the right or on the left) whose extremes belong to $D$, choose an element $a(I) \in A\cap I$ whenever $A\cap I\neq \emptyset$. The elements $a(I)$ form a countable dense subset of $A$.\end{proof}

A real-valued function $f$ is of the \textit{first Baire class} on a Polish space $X$, if it is the pointwise limit of real-valued continuous functions on $X$. That is, $f$ is a first Baire class function if there are continuous functions $\{f_n\}_{n\in\mathbb{N}}$ on $X$ such that $f(x)=\lim_n f_n(x)$ for every $x\in X$. This class of functions is usually denoted by $\boldsymbol{\mathcal{B}_1}(X)$. 

\begin{defi}[Rosenthal compactum]
We say that a topological space $K$ is a Rosenthal compactum if it is homeomorphic to a pointwise compact subspace of $\boldsymbol{\mathcal{B}_1}(\mathbb{N}^\mathbb{N})$.
\end{defi}
If $X$ is an arbitrary Polish space, then every pointwise compact subset of $\boldsymbol{\mathcal{B}_1}(X)$ is a Rosenthal compactum, because $X$ is a continuous image of $\mathbb{N}^\mathbb{N}$.
The class of Rosenthal compact spaces constitutes a generalization of the class of compact metrizable spaces and many of the properties of metrizable compacta hold true as well for Rosenthal compacta. One example is Bourgain-Fremlin-Talagrand's result on the Fréchet-Uryshon property of Rosenthal compacta \cite{BFT}, meaning that every point in the closure of a set is the limit of a convergent sequence from that set. Another remarkable result on the same line states that every Rosenthal compactum contains a dense metrizable subspace \cite{Tod}. We are specially interested in the separable Rosenthal compacta. If we have countably many Borel functions on a Polish space $X$, we can add countably many open sets to the topology of $X$ to get a larger Polish topology where all those functions are continuous. From this observation one gets that:

\begin{prop}\label{separableK}
Every separable Rosenthal compactum is homeomorphic to the pointwise closure in $\boldsymbol{\mathcal{B}_1}(\mathbb{N}^\mathbb{N})$ of a sequence of continuous functions.
\end{prop}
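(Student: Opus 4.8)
The plan is to start from a concrete realization of $K$ as a pointwise compact subset of $\boldsymbol{\mathcal{B}_1}(\mathbb{N}^\mathbb{N})$, extract a countable dense subset, and then \emph{improve} the Polish topology on the domain so that this dense subset becomes a sequence of \emph{continuous} functions, without altering the resulting pointwise closure.

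First I would fix an embedding realizing $K$ as a pointwise compact $K\subseteq\boldsymbol{\mathcal{B}_1}(\mathbb{N}^\mathbb{N})$ and, using separability, choose a countable dense set $\{f_n:n\in\mathbb{N}\}\subseteq K$. Since $K$ is compact and $\mathbb{R}^{\mathbb{N}^\mathbb{N}}$ is Hausdorff, $K$ is closed in $\mathbb{R}^{\mathbb{N}^\mathbb{N}}$, so $K$ coincides with the full pointwise closure of $\{f_n\}_n$. Each $f_n$ is of the first Baire class, hence Borel, on $\mathbb{N}^\mathbb{N}$.

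The key step invokes the standard descriptive-set-theoretic refinement result: given the countably many Borel functions $\{f_n\}_n$ on the Polish space $\mathbb{N}^\mathbb{N}$, there is a finer Polish topology $\tau'$ on the same underlying set, with the same Borel sets, for which every $f_n$ is continuous. Writing $X'=(\mathbb{N}^\mathbb{N},\tau')$, the functions $\{f_n\}_n$ then form a sequence of continuous functions on the Polish space $X'$. Crucially, the topology of pointwise convergence on $\mathbb{R}^{X'}$ depends only on the underlying point set, which is unchanged, so the pointwise closure of $\{f_n\}_n$ computed in $\mathbb{R}^{X'}$ is again exactly $K$. It remains to see that every $g\in K$ is of the first Baire class on $X'$: here I would use that $K$, being Rosenthal, is Fréchet-Urysohn by the Bourgain-Fremlin-Talagrand theorem, so each $g\in K=\overline{\{f_n\}_n}$ is the pointwise limit of a \emph{sequence} drawn from $\{f_n\}_n$; as these are $\tau'$-continuous, $g\in\boldsymbol{\mathcal{B}_1}(X')$. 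Thus $K$ is the pointwise closure in $\boldsymbol{\mathcal{B}_1}(X')$ of the sequence $\{f_n\}_n$ of continuous functions. To return to the canonical domain, I would fix a continuous surjection $\phi:\mathbb{N}^\mathbb{N}\to X'$ (which exists since $X'$ is a nonempty Polish space) and consider $g\mapsto g\circ\phi$; this map is continuous from $\mathbb{R}^{X'}$ to $\mathbb{R}^{\mathbb{N}^\mathbb{N}}$ for the pointwise topologies and injective on functions, so on the compact set $K$ it is a homeomorphism onto its image, which, being compact, is the pointwise closure of $\{f_n\circ\phi\}_n$. Since each $f_n\circ\phi$ is continuous and each $g\circ\phi$ is of the first Baire class on $\mathbb{N}^\mathbb{N}$, this exhibits $K$ in the desired form.

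I expect the main obstacle to be justifying that the pointwise closure remains inside the first Baire class after refining the topology: this is precisely where the Fréchet-Urysohn property is essential, since a priori a point of the closure need only be a net-limit of the $f_n$, whereas membership in $\boldsymbol{\mathcal{B}_1}(X')$ requires a countable approximating sequence of continuous functions. The remaining points, namely that refining the topology and composing with $\phi$ preserve the homeomorphism type of the closure, are delicate to state but ultimately routine verifications.
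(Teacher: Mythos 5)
Your proposal is correct and takes essentially the same route as the paper: the paper's proof is precisely the preceding observation that countably many Borel (here, Baire class one) functions on a Polish space can be made continuous by passing to a finer Polish topology, applied to a countable dense subset of $K$, together with the earlier remark that compact subsets of $\boldsymbol{\mathcal{B}_1}(X')$ transfer back to $\boldsymbol{\mathcal{B}_1}(\mathbb{N}^\mathbb{N})$ by composing with a continuous surjection $\phi:\mathbb{N}^\mathbb{N}\to X'$. One minor simplification: your appeal to the Bourgain--Fremlin--Talagrand theorem is unnecessary, since $\tau'$ refines the original topology, so for each $g\in K$ the sequence of (originally) continuous functions witnessing $g\in\boldsymbol{\mathcal{B}_1}(\mathbb{N}^\mathbb{N})$ already consists of $\tau'$-continuous functions, giving $g\in\boldsymbol{\mathcal{B}_1}(X')$ directly.
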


Recall that a Rosenthal compactum $K$ is premetric of degree at most $n$ if there exists a continuous surjection $f: K\longrightarrow M$ onto a metric compactum $M$ such that $|f^{-1}(x)|\leq n$ for all $x\in M$. Some classical examples of premetric compacta of degree at most $n$ are the compact metric spaces ($n=1$) as well as the Split interval $S(I)$ and the Alexandroff duplicate of the Cantor space $D(2^\mathbb{N})$ ($n=2$). In this regard, in this paper we present the $n-$dimensional versions $S_n(I)$ and $D_n(2^\mathbb{N})$ of $S(I)$ and $D(2^\mathbb{N})$ respectively, and we prove they are premetric compacta of degree at most $n$. In Section \ref{sec::section3} the definition of $S_n(I)$ and $D_n(2^\mathbb{N})$ will be presented along with their classical versions for $n=2$. There are also classical examples of Rosenthal compacta which are not premetric compacta of any degree. One of theses examples is the Alexandroff compactification of a discrete space $D$ of size continuum $A(D)=D\cup \{\infty\}$. For simplicity, in the sequel we will refer to those compacta that are premetric of degree at most $n$ for some natural number $n$, as compacta of degree $n$.

There is a nice topological characterization of those compact spaces (not necessarily Rosenthal) that are of certain degree depending on projections over countable products.
\begin{prop}\label{characterizationdegree}
Given a set $X$ and $K$ a compact subspace of the product $\mathbb{R}^X$ then $K$ is of degree $n$ if and only if there exists a countable set $D_0\subset X$ such that
$$\pi_{D_0}: K\longrightarrow \{f\upharpoonright_{D_0}: f\in K\}$$
is at most $n$-to-$1$.
\end{prop}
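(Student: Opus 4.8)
The plan is to prove the two implications separately; the reverse implication is almost immediate and the forward one will rest on a Stone--Weierstrass argument.

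For ($\Leftarrow$), I would argue as follows. Assume a countable $D_0$ as in the statement exists. Because $D_0$ is countable, $\mathbb{R}^{D_0}$ is metrizable, so the image $\pi_{D_0}(K)=\{f\upharpoonright_{D_0}: f\in K\}$ is a metric compactum, being a continuous image of the compact space $K$. The projection $\pi_{D_0}:K\to\pi_{D_0}(K)$ is then a continuous surjection onto a metric compactum which is at most $n$-to-$1$ by hypothesis, and this is exactly a witness that $K$ is of degree $n$.

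For ($\Rightarrow$), fix a continuous surjection $f:K\to M$ onto a metric compactum with $|f^{-1}(x)|\le n$ for every $x\in M$. Since $M$ embeds into the Hilbert cube $[0,1]^{\mathbb{N}}$, I would replace $f$ by a sequence $(f_k)_{k\in\mathbb{N}}$ of continuous functions $f_k:K\to[0,1]$ with the same fibers as $f$; in particular the map $p\mapsto(f_k(p))_k$ is still at most $n$-to-$1$. The next step is to note that the subalgebra of $C(K)$ generated by the constants and the restrictions to $K$ of the coordinate projections $\pi_x$ ($x\in X$) separates points of $K$, since two distinct elements of $K\subseteq\mathbb{R}^X$ differ in some coordinate. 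By the Stone--Weierstrass theorem this subalgebra is dense in $C(K)$, so each $f_k$ is a uniform limit of polynomials in finitely many coordinate projections. Collecting every coordinate appearing in these approximations, over all $k$, produces a countable set $D_0\subseteq X$.

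The crux of the argument is to promote this uniform approximation to an exact factorization through $\pi_{D_0}$. I would verify that if $g\in C(K)$ is a uniform limit of functions each depending only on coordinates in $D_0$, and if $p,q\in K$ satisfy $\pi_{D_0}(p)=\pi_{D_0}(q)$, then each approximant agrees at $p$ and $q$, whence $g(p)=g(q)$; thus $g$ is constant on the fibers of $\pi_{D_0}$. Applying this to each $f_k$ shows that $f$ factors as $f=\tilde f\circ\pi_{D_0}$, so $\pi_{D_0}^{-1}(y)\subseteq f^{-1}(\tilde f(y))$ for every $y\in\pi_{D_0}(K)$, and the right-hand side has at most $n$ points; hence $\pi_{D_0}$ is at most $n$-to-$1$. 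The step I expect to require the most care is exactly this factorization: density only yields uniform approximation, and one must check that it is strong enough to force the limit to be constant on the fibers of the single countable projection $\pi_{D_0}$, together with the bookkeeping that keeps $D_0$ countable.
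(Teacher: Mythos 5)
Your proof is correct, but your forward implication follows a genuinely different route from the paper's. The backward implication is the same in both (and is essentially forced): $\pi_{D_0}(K)$ is a compact subspace of the metrizable space $\mathbb{R}^{D_0}$, hence a metric compactum, and $\pi_{D_0}$ itself witnesses degree $n$. For the forward implication the paper argues in one line via uniformities: the surjection $\Phi\colon K\twoheadrightarrow M$ is automatically uniformly continuous ($K$ being compact), and since the uniformity of $K$ is the restriction of the product uniformity of $\mathbb{R}^X$, whose basic entourages are determined by finitely many coordinates, for each $m$ there are a finite $F_m\subset X$ and $\delta_m>0$ such that agreement up to $\delta_m$ on $F_m$ forces $d(\Phi(f),\Phi(g))<2^{-m}$; then $D_0=\bigcup_m F_m$ is countable and $f\upharpoonright_{D_0}=g\upharpoonright_{D_0}$ implies $\Phi(f)=\Phi(g)$. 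You reach the same key lemma (a continuous map from $K$ into a metric compactum depends only on countably many coordinates) by embedding $M$ into the Hilbert cube, applying Stone--Weierstrass to approximate each coordinate function $f_k$ uniformly by polynomials in finitely many coordinate projections, and running a three-epsilon argument to promote approximation to exact constancy of each $f_k$ on the fibers of $\pi_{D_0}$, with injectivity of the Hilbert-cube embedding transferring this back to $f$. What each approach buys: the paper's argument is shorter and isolates the structural reason (entourages of a product uniformity are finitely supported), but it presupposes the theory of uniform spaces; yours is longer but elementary modulo Stone--Weierstrass, and it makes fully explicit the approximation-to-exactness step that the paper compresses into the words ``uniformly continuous''. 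Both are complete proofs of the proposition.
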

\begin{proof}
If there exists a $D_0$ such that $\pi_{D_0}$ is at most $n$-to-$1$, since $\pi_{D_0}$ is continuous then $\{f\upharpoonright_{D_0}: f\in K\}$ is a metric compactum and thus $K$ is a compact space of degree $n$. Conversely, let $M$ be a compact metric space and $\Phi: K\twoheadrightarrow M$ continuous and at most $n$-to-$1$.  Since $\Phi$ is a uniformly continuous function, there exists a countable subset $D_0$ of $X$ such that if
$f\upharpoonright_{D_0}=g\upharpoonright_{D_0}$ then $\Phi(f)=\Phi(g)$. This implies that $\pi_{D_0}$ is an at most $n$-to-$1$ map.
\end{proof}

\section{The compact spaces $\boldsymbol{S_n(I)}$ and $\boldsymbol{D_n(2^\mathbb{N})}$}\label{sec::section3}
Ths section will be devoted to present the compact spaces $S_n(I)$ and $D_n(2^\mathbb{N})$. Thy are the respective generalizations of the critical compacta $S(I)$ and $D(2^\mathbb{N})$ and, as we shall see on the last section, they play the same role as the Split Interval and the Alexandroff Duplicate but on the class of separable Rosenthal compacta of degree $n$. For the sake of completeness, first we show the definition of $S(I)$ and $D(K)$ as well as some of their fundamental properties.
\subsection{The compact space $S_n(I)$}
\begin{defi}[Split Interval] 
The Split Interval $S(I)$ is the space $I\times\{0,1\}\setminus\{(0,0),(1,1)\}$ endowed with the topology induced by the lexicographical order.
\end{defi}
It is easy to see that a neighbourhood basis for points $(x,0)$, $(x,1)$ in $S(I)$ is given respectively by the following sets 
\begin{eqnarray*}
](y,0),(x,1)[\;\;\text{with}\;\; y\in I,\,y<x.\\
](x,0),(y,1)[\;\;\text{with}\;\; y\in I,\,x<y.
\end{eqnarray*}
It is convenient to make some comments about the convergence on $S(I)$. Let us note that, given any sequence $\{(x_m,i_m)\}_{m\in\mathbb{N}}$ in $S(I)$, it converges to a point of the first level $(x,0)$ if and only if $\{x_m\}_{m\in\mathbb{N}}$ converges to $x$ from the left; that is, for every $\varepsilon>0$ there exists a natural number $m_0$ such that $x-\varepsilon<x_m\leq x$ for every $m\geq m_0$. The same thing occurs analogously for points of the second level and convergence from the right.

It is easy to see that $S(I)$ is a Rosenthal compactum. Indeed, the map given by $(x,0)\mapsto \mathds{1}_{[0,x)}$ and $(x,1)\mapsto \mathds{1}_{[0,x]}$ is a homeomorphism between $S(I)$ and a closed subspace of the compact space $[0,1]^{[0,1]}$ formed by functions in $\boldsymbol{\mathcal{B}_1}([0,1])$. Moreover, the Split Interval is a non-metrizable and hereditarily separable space. This second statement is immediate since $S(I)$ is separable and thus hereditarily separable by virtue of Theorem \ref{hereditarily}. The first assertion is also straightforward, in fact the only metrizable subspaces of $S(I)$ are the countable ones. We shall see later that the space $D(2^\mathbb{N})$ is not separable, so $S(I)$ does not contain copies of  $D(2^\mathbb{N})$ precisely because $S(I)$ is hereditarily separable. We summarize in the following theorem some of the most important properties of $S(I)$ that will be necessary later:
\begin{teo}[Properties of $S(I)$]\label{propertiesS(I)}
The Split Interval $S(I)$ has the following properties:
\begin{enumerate}
\item It is Rosenthal compactum of degree $2$.
\item It is hereditarily separable.
\item It is non-metrizable so it is not of degree $1$.
\item It does not contain copies of $D(2^\mathbb{N})$.
\end{enumerate}
\end{teo}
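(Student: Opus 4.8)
The plan is to derive the four properties largely from results already in place, with non-metrizability as the single point requiring a genuine argument. The logical skeleton is: the explicit embedding $(x,0)\mapsto \mathds{1}_{[0,x)}$, $(x,1)\mapsto \mathds{1}_{[0,x]}$ already exhibits $S(I)$ as a Rosenthal compactum, so item 1 only needs the degree bound; items 2 and 3 follow from separability together with Theorem \ref{hereditarily} and a second-countability argument respectively; and item 4 is a formal consequence of item 2 once we know that $D(2^\mathbb{N})$ fails to be separable.

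For item 1, I would first note that each indicator $\mathds{1}_{[0,x)}$ and $\mathds{1}_{[0,x]}$ is a pointwise limit of continuous functions (ramp functions such as $\min(1,\max(0,n(x-t)))$), hence lies in $\boldsymbol{\mathcal{B}_1}([0,1])$; thus the image of the embedding is a pointwise-compact subset of $\boldsymbol{\mathcal{B}_1}([0,1])$ and $S(I)$ is Rosenthal. For the degree I would invoke Proposition \ref{characterizationdegree} with $X=[0,1]$ and the countable set $D_0=\mathbb{Q}\cap[0,1]$: two distinct split points, restricted to $\mathbb{Q}$, can coincide only in the case $\mathds{1}_{[0,x)}\!\upharpoonright_{\mathbb{Q}}=\mathds{1}_{[0,x]}\!\upharpoonright_{\mathbb{Q}}$, which happens exactly when $x$ is irrational (the two functions differ only at $x$). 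Hence every fibre of $\pi_{D_0}$ has at most two points, so $S(I)$ is of degree at most $2$; equivalently one may use the $2$-to-$1$ projection $(x,i)\mapsto x$ onto $I$. Combined with item 3 this yields degree exactly $2$.

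For items 2 and 3, I would first check that $S(I)$ is separable: the countable set of split points over the rationals, $\{(q,i):q\in\mathbb{Q}\cap[0,1],\ i\in\{0,1\}\}$, meets every nonempty basic interval, so it is dense. Hereditary separability is then immediate from Theorem \ref{hereditarily}, as $S(I)$ carries an order topology. For non-metrizability, since a separable metrizable space is second countable, it suffices to show $S(I)$ admits no countable base. Given a putative countable base $\mathcal{B}$, for each $x\in(0,1)$ the right ray $U_x=\{z: z\succ (x,0)\}$ is an open neighbourhood of $(x,1)$ that excludes every $(x',1)$ with $x'<x$; choosing $B_x\in\mathcal{B}$ with $(x,1)\in B_x\subseteq U_x$ produces an injection $x\mapsto B_x$ of the uncountable set $(0,1)$ into $\mathcal{B}$, a contradiction. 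Hence $S(I)$ is not metrizable, and since the degree-$1$ compacta are exactly the metric ones, it is not of degree $1$.

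Finally, item 4 is a soft consequence: every subspace of a hereditarily separable space is separable, whereas $D(2^\mathbb{N})$ is not separable (as established in the next subsection). Thus $S(I)$ can contain no homeomorphic copy of $D(2^\mathbb{N})$. The only step that demands real care is the non-metrizability argument of item 3; the remaining items become formal once the embedding into $\boldsymbol{\mathcal{B}_1}([0,1])$, Proposition \ref{characterizationdegree}, Theorem \ref{hereditarily}, and the non-separability of $D(2^\mathbb{N})$ are in hand.
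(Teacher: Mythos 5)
Your proposal is correct and follows essentially the same route as the paper: the indicator-function embedding into $\boldsymbol{\mathcal{B}_1}([0,1])$ for the Rosenthal property, the two-to-one projection onto $I$ (or Proposition \ref{characterizationdegree}) for the degree bound, separability of the rational split points plus Theorem \ref{hereditarily} for hereditary separability, and the non-separability of $D(2^\mathbb{N})$ played against hereditary separability for the last item. The only divergence is that you spell out non-metrizability in full (no countable base, via the injection $x\mapsto B_x$ from $(0,1)$ into a putative base), where the paper merely remarks that this is straightforward; your argument there is valid.
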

Now, we are going to present the compact space $S_n(I)$ which is a natural generalization of the Split Interval. In fact, we give the following slightly more general definition, for any perfect subset $R\subset I$:
\begin{defi}[$n-$Split Interval of $R$]
Given any natural number $n\geq 2$ and any perfect subspace $R\subset I$ the $n-$Split Interval of $R$ $S_n(R)$ is the space $R\times\{0,\ldots, n-1\}$ endowed with the topology for which the points of the form $(x,i)$ with $i\in\{2,\ldots, n-1\}$ are isolated and the points of the form $(x,0)$ and $(x,1)$ have respective basic neighborhoods of the form
\begin{eqnarray*}
\{(x,0)\} \cup \{(y,i) : z_0<y<x, i\in \{0,1,\ldots,n\}\}\\
\{(x,1)\} \cup \{(y,i) : z_1>y>x, i\in \{0,1,\ldots,n\}\}
\end{eqnarray*}
where $z_0,z_1\in R$ with $z_0<x$, $z_1>x$. If $x$ is the minimum of $R$, then $(x,0)$ is isolated, and if it is the maximum, then $(x,1)$ is isolated. If $R$ is the whole space $I$ then the we simply say that $S_n(I)$ is the $n-$Split Interval.
\end{defi}
With this definition, the $2-$Split Interval $S_2(I)$ is the Split Interval $S(I)$ with two extra isolated points. This formal difference between $S(I)$ and $S_2(I)$ is completely irrelevant for our discussion. 
Now we prove some basic results about $S_n(I)$. 
\begin{prop}\label{copyofSnI}
Given any natural number $n\geq 2$ and $R$ any perfect subspace of $I$ then $S_n(R)$ contains copies of $S_n(I)$.
\end{prop}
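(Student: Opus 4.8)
The idea is to build the copy of $S_n(I)$ inside the $n$-split space over a suitable Cantor subset of $R$, collapsing its gaps by gluing pairs of endpoints. Since $R$ is perfect, I would first use a Cantor scheme of closed subintervals of $R$ (Definition \ref{esquemadecantor}) to extract a Cantor set $C\subseteq R$ together with an order preserving homeomorphism $2^{\mathbb{N}}\to C$, $\varepsilon\mapsto x_\varepsilon$. Reading off the dyadic value $\varepsilon\mapsto\sum_i \varepsilon_i 2^{-i-1}$ of each branch yields a continuous nondecreasing surjection $c\colon C\twoheadrightarrow I$ that is injective off the countable set of pairs $\{a_k,b_k\}$ of endpoints of the complementary intervals of $C$, where $c(a_k)=c(b_k)=:v_k$. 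Here $a_k$ is a left-limit but right-isolated point of $C$, and $b_k$ is a right-limit but left-isolated point. Because $C$ is closed in $R$, the subspace $C\times\{0,\dots,n-1\}$ of $S_n(R)$ carries precisely the intrinsic topology of $S_n(C)$, so it suffices to embed $S_n(I)$ into $S_n(C)$.

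Next I would define $e\colon S_n(I)\to S_n(C)$ by $e(v,i)=(c^{-1}(v),i)$ whenever $v$ has a unique $c$-preimage, and at a gap value $v_k$ by
\[
e(v_k,0)=(a_k,0),\qquad e(v_k,1)=(b_k,1),\qquad e(v_k,i)=(a_k,i)\quad(2\le i\le n-1).
\]
In effect the redundant points $(a_k,1)$, $(b_k,0)$ and $(b_k,i)$ for $i\ge2$ are discarded; deleting $(a_k,1)$ and $(b_k,0)$ fuses the adjacent endpoints $a_k,b_k$ into a single split location. Clearly $e$ is a bijection onto its image $S^{*}:=e(S_n(I))$.

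The bulk of the work is to check that $e$ is a homeomorphism onto $S^{*}$ by matching basic neighborhoods. Continuity at a level-$0$ point $(v,0)$ and a level-$1$ point $(v,1)$ follows because $c$ is continuous and nondecreasing, so the $c$-preimage of a left (resp. right) neighborhood of $v$ is a left (resp. right) neighborhood of its representative; the levels $i\ge2$ are isolated on both sides, so nothing is required there. That $e$ is open onto $S^{*}$ reduces to the gap points: one verifies that a basic right-neighborhood of $(b_k,1)$ met with $S^{*}$ is exactly the $e$-image of a basic right-neighborhood of $(v_k,1)$, and symmetrically for $(a_k,0)$; here it is essential that no point of $S^{*}$ lies strictly between $(a_k,0)$ and $(b_k,1)$, which is precisely what the deletions guarantee. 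One also checks that each isolated image $(a_k,i)$ occurs in exactly the neighborhoods it should, namely those of level-$0$ points with representative above $b_k$ and of level-$1$ points with representative below $a_k$, matching the occurrences of $(v_k,i)$ in $S_n(I)$. Finally, the global minimum and maximum of $C$ are right- and left-limit points respectively, so the isolation conventions at the endpoints of $S_n(C)$ agree with those at $0$ and $1$ in $S_n(I)$.

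The main obstacle is exactly this neighborhood bookkeeping at the gaps: showing that the left-limit point $(a_k,0)$ and the right-limit point $(b_k,1)$, once the two intermediate points are removed, glue into a pair indistinguishable from a genuine split pair $(v_k,0),(v_k,1)$ of $S_n(I)$, and that the retained isolated levels sit on the correct side of the collapsed gap. Once $e$ is seen to be a homeomorphic embedding, compactness of $S_n(I)$ forces $S^{*}$ to be compact, hence closed in the Hausdorff space $S_n(R)$, so $S^{*}$ is a genuine copy of $S_n(I)$, as required.
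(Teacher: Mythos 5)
Your proposal is correct and takes essentially the same route as the paper: both pass to a Cantor set $C\subseteq R$, collapse the gap endpoints via the classical order-preserving surjection $C\twoheadrightarrow I$, and discard exactly the redundant isolated points $(a_k,1)$, $(b_k,0)$, $(b_k,i)$ for $i\geq 2$ (the paper's $(\mathcal{L}\times\{1\})\cup(\mathcal{R}\times\{0,2,\ldots,n-1\})$), so that the induced bijection is a homeomorphism. The only difference is cosmetic: the paper defines the map from the pruned compact subspace $\mathscr{X}$ onto $S_n(I)$ and leaves the verification implicit, while you construct the inverse map and spell out the neighborhood bookkeeping at the gaps.
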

\begin{proof}
It is well known that every perfect Polish space contains copies of the Cantor space $2^\mathbb{N}$. Now, let us view the Cantor space $2^\mathbb{N}$ as a subset $C$ of the unit interval in the classical way. By the preceding comment, $S_n(R)$ contains copies of the space $S_n(C)$. Let us note that the subspace $C\times\{0,1\}$ of $S_n(C)$ has countably many isolated points. Namely, the set of its isolated points is the set $(\mathcal{L}\times\{1\})\cup(\mathcal{R}\times\{0\})$ where $\mathcal{L}$ and $\mathcal{R}$ are respectively the set of left and right end points deleted while building $C$. Now consider $\mathscr{X}$ the subspace of $S_n(C)$ obtained by removing the points $(\mathcal{L}\times\{1\})\cup (\mathcal{R}\times\{0,2,\ldots, n-1\})$. Since all those points are isolated in $S_n(C)$, the space $\mathscr{X}$ is compact. Then the map $$
\begin{array}{cccc}
\Psi:& \mathscr{X}& \longrightarrow& S_n(I)\\
& (x,i)& \mapsto& (\Phi(x),i)
\end{array}
$$
where $\Phi$ is the classical surjection between $C$ and $I$, is a homeomorphism. Thus, $S_n(R)$ contains copies of $S_n(I)$.
\end{proof}
\begin{prop}
$S_n(I)$ is a topological Hausdorff compact space.
\end{prop}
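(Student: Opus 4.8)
The plan is to verify separately that $S_n(I)$ is Hausdorff and that it is compact, working with the concrete realization $[0,1]\times\{0,\dots,n-1\}$ and using three facts about the basic neighborhoods: a basic neighborhood of a level-$0$ point $(x,0)$ consists of $(x,0)$ together with all points whose first coordinate lies in a left interval $(z_0,x)$, ranging over \emph{every} level; symmetrically a basic neighborhood of $(x,1)$ sees a right interval $(x,z_1)$ across all levels; and every point $(x,i)$ with $i\geq 2$, together with the two endpoint points at the extremes, is an open singleton. The first two facts also show that these one-sided neighborhoods can be taken arbitrarily small, using that $[0,1]$ is densely ordered.

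For the Hausdorff property I would argue by cases on two distinct points $p,q$. If either is isolated, its singleton is open and it suffices to trim a basic neighborhood of the other so as to exclude the isolated point, which is always possible by shrinking the relevant one-sided interval. If both are non-isolated with distinct first coordinates $x<x'$, I would fix a separating point $m$ with $x<m<x'$ and choose the neighborhoods of $p$ and of $q$ so that the former lies entirely below $m$ and the latter entirely above $m$; the only content is that whichever one-sided neighborhood is needed can be pushed to one side of $m$. Finally, if $p,q$ are non-isolated with the same first coordinate $x$, they must be $(x,0)$ and $(x,1)$, and the left-looking neighborhood of $(x,0)$ and the right-looking neighborhood of $(x,1)$ can meet only at first coordinate $x$, where they contain the distinct points $(x,0)$ and $(x,1)$; hence they are disjoint.

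For compactness I would run the classical Heine–Borel supremum argument adapted to this space. Fix an open cover $\mathcal{U}$, set $P_x=[0,x]\times\{0,\dots,n-1\}$, and let $T=\{x\in[0,1]:P_x\text{ admits a finite subcover from }\mathcal{U}\}$. Then $0\in T$ since $P_0$ is finite, and $T$ is an initial segment because $P_{x''}\subseteq P_x$ for $x''\leq x$. Writing $s=\sup T$, I would first show $s\in T$: a member of $\mathcal{U}$ containing $(s,0)$ swallows all points with first coordinate in some $(z_0,s)$ across every level, and since $z_0<s=\sup T$ some $x'\in T$ lies in $(z_0,s]$; then a finite subcover of $P_{x'}$, this one set, and finitely many sets for the at most $n-1$ points $(s,i)$ with $i\geq 1$ together cover $P_s$. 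I would then show $s=1$: if $s<1$, a member of $\mathcal{U}$ containing $(s,1)$ swallows all points with first coordinate in some $(s,z_1)$ across every level, so for any $x\in(s,z_1)$ adjoining this set to a finite subcover of $P_s$ covers $P_x$, forcing $x\in T$ and contradicting $s=\sup T$. Hence $P_1=S_n(I)$ has a finite subcover.

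The step I expect to carry the real content, and the one I would watch most carefully, is the interaction between the continuum-many isolated middle-level points and compactness. A priori uncountably many open singletons would ruin compactness; what rescues the space is exactly that the neighborhoods of the level-$0$ and level-$1$ points are defined to capture all levels on one side, so that every isolated $(y,i)$ with $i\geq 2$ accumulates to the level-$0$ and level-$1$ points flanking it and is therefore absorbed into the one-sided neighborhoods driving the supremum argument. Checking that each bookkeeping step leaves only finitely many genuinely uncovered points (those sitting exactly at the current first coordinate) is the crux; once that is confirmed, both the Hausdorff case analysis and the Heine–Borel induction are routine.
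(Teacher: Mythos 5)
Your proof is correct, but it takes a genuinely different route from the paper's. The paper dismisses the Hausdorff property as straightforward and proves compactness by a reduction-and-absorption argument: it first assumes (without loss of generality) that the cover consists of singletons and basic sets of the form $\left](x_i,0),(y_i,1)\right[\;\cup\;]x_i,y_i[\times\{2,\ldots,n-1\}$, then invokes the known compactness of the subspace $I\times\{0,1\}\cong S_2(I)$ (i.e., the classical Split Interval) to extract a finite subfamily covering the bottom two levels, and finally observes that the only points possibly left uncovered are the finitely many ones sitting over the right endpoints $y_i$, which are absorbed by finitely many additional sets. You instead give a self-contained Heine--Borel supremum induction along $[0,1]$ (with $P_x=[0,x]\times\{0,\ldots,n-1\}$ and $T$ the set of $x$ with $P_x$ finitely coverable), never using compactness of $S(I)$; indeed your argument reproves that classical fact as the case $n=2$. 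What the paper's approach buys is brevity, at the price of quoting the compactness of the Split Interval; what yours buys is completeness and an explicit identification of the mechanism (one-sided neighborhoods of level-$0$ and level-$1$ points swallow all $n$ levels, so the continuum many isolated points are harmless), at the price of relying on the order-completeness of $[0,1]$ and a longer case analysis --- you also supply the Hausdorff verification the paper omits. Two trivial corner cases you should make explicit in the supremum argument: if $s=\sup T=0$ then $s\in T$ outright (needed because $(0,0)$ is isolated, so it has no one-sided basic neighborhood), and symmetrically the step ruling out $s<1$ correctly uses that $(s,1)$ is non-isolated precisely when $s<1$.
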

\begin{proof}
It is straightforward to see that $S_n(I)$ is Hausdorff so we only prove compactness. Given any open covering $\{\mathcal{U}_i\}_{i\in\mathcal{I}}$ of $S_n(I)$ we can suppose without loss of generality that $\mathcal{I} = \mathcal{I}_0\cup \mathcal{I}_1$ and $\mathcal{U}_i$ is a singleton if $i\in\mathcal{I}_0$, while if $\i\in\mathcal{I}_1$, the open set $\mathcal{U}_i$ is of the basic form 
$$\mathcal{U}_i= \left](x_i,0),(y_i,1)\right[\;\;\cup\;\;]x_i,y_i[\times\{2,\ldots, n-1\},$$
where $]a,b[$ is denoting interval in the lexicographical order of $I\times \{0,1\}$ or in the usual order of $I$. Let us note that the subspace $I\times\{0,1\}$ of $S_n(I)$ is homeomorphic to $S_2(I)$, which is compact. So, we can find a finite $F\subset \mathcal{I}_1$ such that $\{\mathcal{U}_i\}_{i\in F}$ covers $I\times\{0,1\}$. Let also $\{\mathcal{U}_i\}_{i\in G}$ be a finite subfamily of $\{\mathcal{U}_i\}_{i\in \mathcal{I}}$ which contains the points of the form $(x_i,j)$ and $(y_i,j)$ with $i\in F$ and $0\leq j\leq n-1$. We claim that the family $\{\mathcal{U}_i\}_{i\in F\cup G}$ is a finite subcover of $\{\mathcal{U}_i\}_{i\in \mathcal{I}}$. Indeed, given $(x,j)\in S_n(I)$, we have that $(x,0)\in I\times\{0,1\}$, so there exists some index $i_0\in F$ such that $(x,0)\in\mathcal{U}_{i_0}$. If $x\neq y_{i_0}$, then  $(x,j)\in\mathcal{U}_{i_0}$, by the form of $\mathcal{U}_i$. If otherwise $x = y_{i_0}$, then $(x,j)$ lies in some open set $\mathcal{U}_k$ with $k\in G$.
\end{proof}
\begin{teo}
$S_n(I)$ is a Rosenthal compactum.
\end{teo}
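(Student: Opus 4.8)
The plan is to exhibit an explicit homeomorphism of $S_n(I)$ onto a pointwise compact subset of $\boldsymbol{\mathcal{B}_1}(X)$ for a suitable Polish space $X$, generalizing the embedding of $S(I)$ recalled above (the map $(x,0)\mapsto\mathds{1}_{[0,x)}$, $(x,1)\mapsto\mathds{1}_{[0,x]}$). Since $X$ will be Polish, the observation in Section \ref{sec::section2} — that every pointwise compact subset of $\boldsymbol{\mathcal{B}_1}(X)$ is a Rosenthal compactum whenever $X$ is Polish, because $X$ is a continuous image of $\mathbb{N}^\mathbb{N}$ — then finishes the proof. As domain I would take the Polish space $X:=I\sqcup(I\times\{2,\ldots,n-1\})$, a finite disjoint union of copies of the unit interval; the first copy carries the split structure of levels $0$ and $1$, while the remaining copies serve to isolate the higher levels.

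For $(x,i)\in S_n(I)$ I would define $F_{(x,i)}\in\{0,1\}^X$ as follows. On the first copy of $I$, set $F_{(x,i)}(t)=\mathds{1}_{[0,x)}(t)$ when $i=0$ and $F_{(x,i)}(t)=\mathds{1}_{[0,x]}(t)$ when $i\geq 1$. On the remaining part, set $F_{(x,i)}(t,j)=1$ if $(t,j)=(x,i)$ and $0$ otherwise, so this component vanishes identically unless $i\geq 2$. Put $\Psi(x,i):=F_{(x,i)}$. The first copy reproduces the $S(I)$ embedding and records both the location $x$ and the distinction between level $0$ and the levels $\geq 1$ (through the value at $t=x$), while the $j$-th extra copy carries the indicator of the single point $x$ precisely for the isolated level $j$.

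The verification proceeds in three routine steps. \emph{Injectivity:} two images differ on the first copy if the base points $x$ differ or if exactly one of the two levels equals $0$, and two points $(x,i)\neq(x,i')$ with $i,i'\geq 1$ are separated by the second-part coordinate indexed by the relevant singleton. \emph{Continuity:} I would check it coordinatewise. Each first-copy coordinate $e_t$ is the $\{0,1\}$-valued function with $e_t^{-1}(1)=\{(x,i):x>t\}\cup\{(t,i):i\geq 1\}$, and one verifies directly from the basic neighborhoods that both $e_t^{-1}(0)$ and $e_t^{-1}(1)$ are open; each second-part coordinate is the indicator of a single point $(t,j)$, which is isolated in $S_n(I)$ for $j\geq 2$, hence continuous. \emph{First Baire class:} on each copy of $I$ the restriction of $F_{(x,i)}$ is $\mathds{1}_{[0,x)}$ (lower semicontinuous), $\mathds{1}_{[0,x]}$ (upper semicontinuous), or the indicator of a single point (upper semicontinuous), and every bounded semicontinuous function on a metric space is of the first Baire class; since $X$ is a finite disjoint union, membership in $\boldsymbol{\mathcal{B}_1}(X)$ follows. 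Once these are in place, $\Psi$ is a continuous injection of the compact space $S_n(I)$ into the Hausdorff space $\mathbb{R}^X$, hence a homeomorphism onto its image, and that image is automatically compact and contained in $\boldsymbol{\mathcal{B}_1}(X)$.

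The only genuinely delicate point is the continuity of the split-type coordinates $e_t$, that is, showing $e_t^{-1}(1)$ is open: this is exactly where the one-sided convergence built into $S_n(I)$ is used, namely that a point $(x,0)$ is approached only by points to its left and $(x,1)$ only by points to its right, across all levels simultaneously. By contrast, the isolated higher levels are dealt with cleanly by the extra copies of $I$, and the compactness of the image together with the homeomorphism property require no extra work, being inherited from the compactness of $S_n(I)$ established in the previous proposition.
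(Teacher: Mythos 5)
Your proof is correct, but it follows a genuinely different route from the paper's. The paper does not verify Baire class one by hand: it maps $S_n(I)$ into the product $S_2(I)\times A(I)^{n-2}$ (where $A(I)$ is the one-point compactification of the interval made discrete), sending $(x,i)\mapsto((x,1),\infty,\ldots,x,\ldots,\infty)$ for $i\geq 2$, checks that this injection is continuous, and then invokes the facts that $S_2(I)$ and $A(I)$ are Rosenthal compacta and that the class is closed under countable products and closed subspaces. Your construction is the function-space shadow of the same combinatorial idea --- your first-copy coordinates $\mathds{1}_{[0,x)},\mathds{1}_{[0,x]}$ play the role of the $S_2(I)$ factor, and your singleton-indicator coordinates on the extra copies of $I$ play exactly the role of the $A(I)$ factors --- but you certify the conclusion differently: you exhibit an explicit pointwise compact set of $\{0,1\}$-valued functions on the Polish space $X=I\sqcup(I\times\{2,\ldots,n-1\})$, prove each image function is Baire class one via semicontinuity of indicators of open/closed sets, and then use the paper's remark that pointwise compact subsets of $\boldsymbol{\mathcal{B}_1}(X)$ for Polish $X$ are Rosenthal. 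What each approach buys: yours is self-contained and more elementary, needing neither the closure of the class under products and closed subspaces nor the prior knowledge that $A(D)$ is Rosenthal, and it directly generalizes the classical embedding of $S(I)$; the paper's is shorter modulo those cited structural facts and keeps the verification down to a single continuity check. All the delicate points in your argument --- openness of both $e_t^{-1}(0)$ and $e_t^{-1}(1)$ using the one-sided neighborhoods, injectivity via the coordinate at $t=x$ and the singleton coordinates, and the compact-to-Hausdorff homeomorphism principle --- do go through as you state them.
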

\begin{proof}
Let us remind that the class of Rosenthal compacta is closed under taking closed subspaces and countable products.
Now, consider the map $\Phi: S_n(I)\longrightarrow S_2(I)\times A(I)^{n-2}$ given by
\begin{eqnarray*}
\Phi(x,0)=((x,0),\infty,\ldots, \infty)\hspace{4.65cm}\\
\Phi(x,1)=((x,1),\infty,\ldots, \infty)\hspace{4.65cm}\\
\Phi(x,i)=((x,1),\infty,\ldots,\underbrace{x}_{i},\ldots, \infty)\;\; \text{if}\; i\in\{2,\ldots, n-1\}
\end{eqnarray*}
We will prove that $\Phi$ is an embedding between $S_n(I)$ and $S_2(I)\times A(I)^{n-2}$ and thus $S_n(I)$ will be a Rosenthal compactum. Since $\Phi$ is injective it shall be enough to check that is continuous. It is obvious that the projection onto the first coordinate \linebreak$\pi_1\circ \Phi: S_n(I)\longrightarrow S_2(I)$ is a continuous map. For the others, it will be enough to study the points of the form $(x,0)$ since the case $(x,1)$ is analogous and the others are isolated. Let us note that, when $i>1$,
$$
\begin{array}{cccc}
\pi_i\circ \Phi:& S_n(I)&\longrightarrow &A(I)\\
& (x,j)& \mapsto & \begin{cases}
\infty\;\;\;\text{if}\;\; j\neq i\\
x\;\;\;\text{if}\;\; j=i
\end{cases}
\end{array}
$$
Since a neighbourhood of $\pi_i\circ\Phi(x,0)=\infty$ is of the form $\mathcal{V}= I\setminus F\cup\{\infty\}$ where $F$ is a finite subset of $I$, then the neighbourhood
$$\mathcal{U}= S_n(I) \setminus\{(y,i) : y\in F, i\in\{2,\ldots,n-1\}\}$$
satisfies $\Phi(\mathcal{U})\subset \mathcal{V}$.
\end{proof}
If $n\geq 2$ then $S_n(I)$ is non-metrizable. Moreover, we now prove that the $S_n(I)$ is a Rosenthal compactum of degree $n$ but not of degree $n-1$.
\begin{teo}\label{degreeNSplit}
$S_n(I)$ is a Rosenthal compactum of degree $n$ but not of degree $n-1$.
\end{teo}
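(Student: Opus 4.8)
The statement has two halves, and the plan is to treat them separately.

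For the positive part (degree at most $n$) the natural candidate is the first-coordinate projection $\pi: S_n(I)\longrightarrow I$, $\pi(x,i)=x$, onto the metric compactum $I$. Each fibre $\pi^{-1}(x)=\{x\}\times\{0,\ldots,n-1\}$ has exactly $n$ points, so it remains only to check that $\pi$ is continuous. This is a routine verification on basic open sets: if $U\subseteq I$ is open and $x\in U$, then a basic neighbourhood of $(x,0)$ satisfies $\pi(\{(x,0)\}\cup\{(y,i):z_0<y<x\})=(z_0,x]$, which lies in $U$ once $z_0$ is chosen close enough to $x$; the point $(x,1)$ gives an image $[x,z_1)$ handled the same way, and the isolated points are trivial. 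Hence $\pi$ is a continuous $n$-to-$1$ surjection and $S_n(I)$ is premetric of degree at most $n$.

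For the negative part I would argue by contradiction: suppose $g: S_n(I)\longrightarrow M$ is a continuous surjection onto a metric compactum with every fibre of size at most $n-1$. The key observation is a convergence fact read directly off the topology: for each $x$ and each $i\in\{0,\ldots,n-1\}$, any sequence $(y_m,i)$ with $y_m\to x^-$ converges to $(x,0)$, while $(y_m,i)$ with $y_m\to x^+$ converges to $(x,1)$, since the basic neighbourhoods of $(x,0)$ and $(x,1)$ absorb all $n$ sheets from the left and from the right respectively. Writing $h_i(y):=g(y,i)$ and using continuity of $g$ together with the sequentiality of the metric space $M$, one obtains that every one of the $n$ functions $h_0,\ldots,h_{n-1}:I\longrightarrow M$ has the \emph{same} left limit $g(x,0)$ and the \emph{same} right limit $g(x,1)$ at each $x$.

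Thus each $h_i$ is a regulated function into $M$, i.e. it has one-sided limits everywhere. I would then invoke the standard lemma that such a function has at most countably many discontinuities: for each $\varepsilon>0$ the set on which the oscillation between the one-sided limits is $\geq\varepsilon$ cannot accumulate (at an accumulation point both one-sided limits would coincide), hence is finite, and the discontinuity set is the countable union of these over $\varepsilon=1/k$. Let $C\subset I$ be the countable union over $i$ of the discontinuity sets of the $h_i$, enlarged by the two endpoints of $I$. For $x\notin C$ every $h_i$ is continuous at $x$, which forces $g(x,i)=g(x,0)=g(x,1)$ for all $i$; that is, the whole column $\{x\}\times\{0,\ldots,n-1\}$ collapses to a single point of $M$, producing a fibre of cardinality $\geq n$. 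Since $I\setminus C$ is uncountable such an $x$ exists, contradicting that $g$ is at most $(n-1)$-to-$1$. (Non-metrizability is then immediate, since a metric compactum would be of degree $1\leq n-1$.)

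The only genuinely non-trivial ingredient is the regulated-function lemma together with the clean extraction of the common one-sided limits from the neighbourhood bases; the rest is bookkeeping. I expect the main obstacle to be purely in writing these two points carefully, while the behaviour at the endpoints of $I$ (where $(0,0)$ and $(1,1)$ become isolated) only perturbs $C$ by finitely many points and is absorbed harmlessly.
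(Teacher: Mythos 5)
Your first half (the $n$-to-one projection onto $I$) is exactly the paper's argument, but for the second half you take a genuinely different route. The paper never argues with an arbitrary continuous $g\colon S_n(I)\to M$ directly; it invokes Proposition \ref{characterizationdegree}, embeds $S_n(I)$ into $\mathbb{R}^{\mathscr{F}}$ via the evaluation map on the family $\mathscr{F}$ of characteristic functions $\mathds{1}_{[x,y]\times\{0,\ldots,n-1\}\setminus\{(x,0),(y,1)\}}$ and $\mathds{1}_{\{(x,i)\}}$ ($i\geq 2$), and observes that any countable subfamily $N$ misses some $x$ not among its countably many parameters, so the whole column $\{(x,i): i\in\{0,\ldots,n-1\}\}$ is collapsed by $\pi_N$; by Proposition \ref{characterizationdegree} this rules out degree $n-1$. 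Your argument instead exploits the order structure of $I$: one-sided limits of the sheet functions $h_i$ plus countability of discontinuities of regulated functions. It is more self-contained (it does not need the reduction of an arbitrary $g$ to a coordinate projection, which is where the paper's uniform-continuity step does the work), while the paper's scheme is a template that is reused verbatim for $D_n(2^\mathbb{N})$ (Proposition \ref{degreeofDn}) and for the Bernstein-set example in Section \ref{sec::section4}.

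One step of yours needs repair, and it matters precisely when $n\geq 3$. You justify the countability lemma by counting only the points where ``the oscillation \emph{between the one-sided limits}'' is $\geq\varepsilon$, i.e.\ only jump discontinuities, where $g(x,0)\neq g(x,1)$. That weaker statement only yields, outside a countable set, $g(x,0)=g(x,1)$ --- a fibre of size $\geq 2$, which contradicts nothing once $n\geq 3$. The discontinuities of $h_i$ for $i\geq 2$ are typically \emph{removable}: the points $(x,i)$, $i\geq 2$, are isolated in $S_n(I)$, so continuity of $g$ puts no constraint relating $g(x,i)$ to the one-sided limits, and a priori one could have $h_i(x^-)=h_i(x^+)=g(x,0)$ while $h_i(x)=g(x,i)\neq g(x,0)$ on an uncountable set; no count of jumps between one-sided limits will ever see this. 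What you must invoke is the full form of the standard lemma: for a function into a metric space with one-sided limits everywhere, for each $\varepsilon>0$ the set of $x$ with $\max\bigl(d(h_i(x),h_i(x^-)),\,d(h_i(x),h_i(x^+)),\,d(h_i(x^-),h_i(x^+))\bigr)\geq\varepsilon$ is finite. The proof is the same non-accumulation idea applied to the oscillation of the function itself: if such points $z_k$ accumulated at $x^*$, say from the left, then near each $z_k$ one finds $u_k,v_k<x^*$ with $d(h_i(u_k),h_i(v_k))\geq\varepsilon/3$, while both $h_i(u_k)$ and $h_i(v_k)$ must converge to the left limit $h_i(x^{*-})$, which your convergence analysis guarantees exists --- a contradiction. (Also, your parenthetical reason, ``at an accumulation point both one-sided limits would coincide,'' is not the reason; the point is that a one-sided limit would fail to exist.) With the lemma in this strength, your set $C$ is countable, every $x\notin C$ collapses all $n$ points $(x,0),\ldots,(x,n-1)$ to one point of $M$, and your contradiction stands.
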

\begin{proof}
$S_n(I)$ is obviously a Rosenthal compactum of degree $n$ since the projection onto the first coordinate is continuous and at most $n$-to-$1$. To see that $S_n(I)$ is not of degree $n-1$ we need Proposition \ref{characterizationdegree}. First of all notice that the family of characteristic functions  
$$\mathscr{F}=\{\mathds{1}_{[x,y] \times \{0,\ldots,n-1\}\setminus\{(x,0),(y,1)\}}\}_{x,y\in I, x<y}\cup\{\mathds{1}_{\{(x,i)\}}\}_{x\in I,i\in\{2,\ldots,n-1\}}$$
is formed by continuous functions and separates points and thus the evaluation map $e:(x,i)\mapsto e_{(x,i)}$ establishes an embedding between $S_n(I)$ and $\mathbb{R}^\mathscr{F}$. If $S_n(I)$ were of degree $n-1$ by virtue of Proposition \ref{characterizationdegree} there would exist a countable subset $D_0$ of $\mathscr{F}$ such that $\pi_{D_0}$ is at most $(n-1)$-to-$1$ on the image of the above embedding. We are going to see that in fact given any countable subset of $\mathscr{F}$ the respective projection map is at least $n$-to-$1$. Given $N$ 
$$N=\{\mathds{1}_{[x_k,y_k] \times \{0,\ldots,n-1\}\setminus\{(x_k,0),(y_k,1)\}}\}_{k\in\mathbb{N}}\cup\{\mathds{1}_{\{(z_k,i_k)\}}\}_{k\in\mathbb{N}}$$
a countable subset of $\mathscr{F}$, we can pick a point $x\in I$ such that $x\not\in \{x_k,y_k,z_k : k\in\mathbb{N}\}$. Now, if we take any $i\in\{0,1,\ldots,n-1\}$ and any $k\in\mathbb{N}$, then
$$ \mathds{1}_{[x_k,y_k] \times \{0,\ldots,n-1\}\setminus\{(x_k,0),(y_k,1)\}}((x,i)) = \mathds{1}_{[x_k,y_k]}(x)$$
$$\mathds{1}_{(z_k,i_k)}((x,i)) = 0$$
Thus, all evaluations $\{e_{(x,i)} : i = 0,1,\ldots,n-1\}$ have the same image under $\pi_N$, so this is at least $n$-to-1.
\end{proof}
If $R$ is a perfect subspace of $I$, then $S_n(R)$ is a closed subset of $S_n(I)$, and by Proposition~\ref{copyofSnI} also $S(I)$ is homeomorphic to a closed subset of $S_n(R)$. Therefore, the previous results for $S_n(I)$ hold for $S_n(R)$ as well.
\begin{teo}[Properties of $S_n(R)$]\label{PropertiesS_n}
For every perfect subspace $R$ of the unit interval $I$, the $n-$Split Interval of $R$ has the following properties:
\begin{enumerate}
\item It is a Rosenthal compactum of degree $n$ but not of degree $n-1$.
\item It is non-separable.
\item It is non-metrizable.
\item It contains copies of the $n-$Split Interval $S_n(I)$.
\end{enumerate}
\end{teo}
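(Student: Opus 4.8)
The plan is to derive all four properties from the two structural facts recorded just above the statement: on one hand $S_n(R)$ sits as a \emph{closed} subspace of $S_n(I)$, and on the other hand, by Proposition~\ref{copyofSnI}, $S_n(I)$ embeds as a closed subspace of $S_n(R)$. The first fact lets any property that is inherited by closed subspaces descend from $S_n(I)$ to $S_n(R)$, while the second lets any property that is witnessed by a closed subspace (such as failing to be of a given degree) ascend from the embedded copy of $S_n(I)$ to all of $S_n(R)$. Property 4 is literally Proposition~\ref{copyofSnI}, so nothing remains to do there, and I would simply cite it.

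For property 1, I would first record the easy monotonicity lemma that the degree can only drop on closed subspaces: if $f\colon L\longrightarrow M$ is continuous and at most $n$-to-$1$ onto a metric compactum and $L'\subseteq L$ is closed, then $f\upharpoonright_{L'}$ is continuous and at most $n$-to-$1$ onto the metric compactum $f(L')$. Applying this to the degree-$n$ map on $S_n(I)$ (Theorem~\ref{degreeNSplit}) and the closed subspace $S_n(R)$ shows $S_n(R)$ has degree at most $n$; and being a closed subspace of the Rosenthal compactum $S_n(I)$, it is itself Rosenthal. For the lower bound I use the same lemma contrapositively: if $S_n(R)$ were of degree $n-1$, then so would be its closed subspace homeomorphic to $S_n(I)$, contradicting Theorem~\ref{degreeNSplit}. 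Hence $S_n(R)$ is of degree $n$ but not of degree $n-1$.

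Property 3 is then immediate, since a metric compactum is exactly a compactum of degree $1$, and as $n\geq 2$ the space $S_n(R)$ is not even of degree $n-1\geq 1$. The one point genuinely needing care is property 2, and here I would argue directly rather than by transfer, since separability is inherited neither by sub- nor by superspaces. For $n\geq 3$ the set $\{(x,i): x\in R,\ 2\leq i\leq n-1\}$ consists of isolated points of $S_n(R)$, and since $R$ is perfect it has cardinality continuum; as every dense subset must contain each isolated point, no countable dense set can exist, so $S_n(R)$ is non-separable. I expect this isolated-points count to be the crux of the argument, and it also pins down where the hypothesis is used: when $n=2$ there are no such levels and, $S_2(R)$ being a subspace of the separable order space $S_2(I)$, Theorem~\ref{hereditarily} makes $S_2(R)$ in fact separable; so property 2 should be understood for $n\geq 3$. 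Thus it is precisely the abundance of isolated levels, the feature distinguishing $S_n(I)$ from the classical $S(I)$, that supplies non-separability.
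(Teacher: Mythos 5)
Your proposal is correct, and for items 1, 3 and 4 it is essentially the paper's own argument: the paper's entire proof is the remark immediately preceding the statement, namely that $S_n(R)$ is a closed subspace of $S_n(I)$ while, by Proposition~\ref{copyofSnI}, $S_n(I)$ is homeomorphic to a compact (hence closed) subspace of $S_n(R)$, so that ``the previous results for $S_n(I)$ hold for $S_n(R)$ as well''; your monotonicity lemma for the degree under passing to closed subspaces is exactly the content left implicit there. Where you genuinely depart from (and improve on) the paper is item 2. The transfer argument cannot deliver non-separability: separability transfers neither to closed subspaces nor from a closed subspace to the ambient space in general, and in any case non-separability of $S_n(I)$ is nowhere among the ``previous results'' (it appears neither in Theorem~\ref{degreeNSplit} nor in the surrounding discussion). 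Your direct argument --- that for $n\geq 3$ the set $R\times\{2,\ldots,n-1\}$ consists of continuum many isolated points of $S_n(R)$, which every dense set must contain --- is the right proof, and your caveat is a genuine correction to the statement as printed: for $n=2$ item 2 is false, since $S_2(I)$ is separable (the doubled rationals together with the two isolated endpoints are dense), in accordance with the paper's own Theorem~\ref{propertiesS(I)}, and likewise $S_2(R)$ is separable for every perfect $R$ by hereditary separability of $S(I)$ plus finitely many isolated points. So item 2 must indeed be read as a claim about $n\geq 3$; this is harmless for the rest of the paper, since only item 4 of Theorem~\ref{PropertiesS_n} is invoked later (in Case 2 of the proof of the main theorem).
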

\subsection{The compact space $D_n(2^\mathbb{N})$}
\begin{defi}[Alexandroff Duplicate]
Given any topological Hausdorff space $(X,\mathscr{T})$ we define its Alexandroff Duplicate $D(X)$ as the space $X\times\{0,1\}$ endowed with the topology for which the points $(x,1)$ are isolated and the points $(x,0)$ have neighbourhoods of the form
$$\mathscr{U}\times\{0,1\}\setminus\{(x,1)\}$$
where $\mathscr{U}$ is a $\mathscr{T}-$neighbourhood of $x$.
\end{defi}
It is well-kwown that if $K$ is a compact Hausdorff space then $D(K)$ is a compact Hausdorff space too. In fact, the space $D(K)$ is compact if and only if $K$ is a compact space. If $K$ is a compact metric space, then $D(K)$ is a Rosenthal compactum \cite{Tod}. In this paper we are interested in the compact space $D(2^\mathbb{N})$. It is a non separable space (because it has uncountably many isolated points) and thus is not metrizable. The results from \cite{Tod}, show that the Alexandroff Duplicate $D(2^\mathbb{N})$ is a critical example of Rosenthal compactum. For example, it embeds into any separable Rosenthal compactum of degree $2$ that is not hereditarily separable. 

\begin{prop}[Properties of $D(2^\mathbb{N})$]\label{propertiesDuplicate}
The Alexandroff Duplicate $D(2^\mathbb{N})$ has the following properties:
\begin{enumerate}
\item It is a Rosenthal compactum of degree $2$.
\item It is not separable.%
\item It is not metrisable so it is not of degree $1$.
\item It is monolithic (every separable subspace is metrizable) so it does not contain copies of $S(I)$.
\end{enumerate}
\end{prop}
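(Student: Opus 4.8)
The plan is to verify the four properties in turn, the first three being short and the monolithicity in item~4 carrying the real content. For item~1 I would invoke the fact recalled just above the statement that $D(K)$ is a Rosenthal compactum whenever $K$ is a compact metric space \cite{Tod}; applying this to $K=2^\mathbb{N}$ shows $D(2^\mathbb{N})$ is Rosenthal. To see it has degree $2$ I would exhibit the projection $\pi: D(2^\mathbb{N})\to 2^\mathbb{N}$, $(x,i)\mapsto x$. It is surjective, its fibres $\pi^{-1}(x)=\{(x,0),(x,1)\}$ have size $2$, and it is continuous because the preimage of an open $\mathscr{U}\subseteq 2^\mathbb{N}$ is $\mathscr{U}\times\{0,1\}$, which is open in $D(2^\mathbb{N})$. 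Since $2^\mathbb{N}$ is a metric compactum, this witnesses degree at most $2$.

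For item~2 I would observe that every point $(x,1)$ is isolated, so $2^\mathbb{N}\times\{1\}$ is a set of isolated points of size continuum; since every dense subset must contain all isolated points, $D(2^\mathbb{N})$ admits no countable dense set. Item~3 is then immediate: a compact metrizable space is separable, so a non-separable compact space such as $D(2^\mathbb{N})$ cannot be metrizable, and in particular it is not of degree~$1$.

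The heart of the proof is item~4. First I would establish monolithicity, namely that every separable subspace $A$ is metrizable. Passing to $\overline{A}$, which is again separable and now compact, I may assume $A$ is compact. The key observation is that any point $(x,1)\in A$ is isolated in $A$, and a separable space has only countably many isolated points; moreover no point $(x,1)$ can lie in $\overline{A}\setminus A$, since $\{(x,1)\}$ is open and so would have to meet $A$. Hence $C:=A\cap(2^\mathbb{N}\times\{1\})$ is countable. I would then build a countable point-separating family of continuous functions on $A$: the coordinate maps $\chi_n$ of $2^\mathbb{N}$ composed with $\pi$ separate any two points with distinct first coordinates, and for each of the countably many points $(x,1)\in C$ the function $\mathds{1}_{\{(x,1)\}}$ is continuous (the singleton being clopen) and separates $(x,0)$ from $(x,1)$ when both belong to $A$. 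The diagonal of this countable family is a continuous injection of the compact space $A$ into $2^\mathbb{N}$, hence an embedding, so $A$ is metrizable. Finally, since $S(I)$ is separable and non-metrizable by Theorem~\ref{propertiesS(I)}, monolithicity forbids $D(2^\mathbb{N})$ from containing a homeomorphic copy of $S(I)$.

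I expect the only genuine obstacle to be the organization of item~4: one must check that the reduction to a compact $A$ introduces no new top-level points, and that the countable family truly separates the two points lying over a common first coordinate. Once these points are settled the argument is essentially bookkeeping, and items~1--3 are routine.
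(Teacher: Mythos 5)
Your proof is correct and follows essentially the same route as the paper, which establishes these facts for the general spaces $D_n(2^\mathbb{N})$: the coordinate projection onto $2^\mathbb{N}$ witnesses degree $2$, the continuum of isolated points $(x,1)$ gives non-separability (hence non-metrizability), and monolithicity comes from a countable point-separating family of clopen sets (basic sets pulled back through $\pi$ together with the countably many singletons $\{(x,1)\}$ meeting the separable subspace). Your version merely phrases the separating family as continuous indicator functions and spells out the embedding into $2^\mathbb{N}$, which is the same argument in slightly more detail.
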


We shall give a proof of the above facts for the more general spaces $D_n(2^\mathbb{N})$ that we define now: 

\begin{defi}[Alexandroff $n-$plicate]
Given any natural number $n\geq 2$ and a topological Hausdorff space $(X,\mathscr{T})$ the Alexandroff $n-$plicate $D_n(X)$ is the space \linebreak$X\times\{0,\ldots, n-1\}$ endowed with the topology for which the points $(x,i)$ with \linebreak$i\in\{1,\ldots, n-1\}$ are isolated and the points $(x,0)$ have basic neighbourhoods of the form
$$\mathscr{U}\times\{0,\ldots, n-1\}\setminus\bigcup_{i=1}^{n-1} \{(x,i)\}$$
where $\mathscr{U}$ is a $\mathscr{T}-$neighbourhood of $x$.
\end{defi}
Let us note that the Alexandroff $2-$plicate $D_2(X)$ coincides with its classical version $D(X)$. Moreover, the Alexandroff $n-$plicate share with the Alexandroff Duplicate all the properties mentioned in Proposition \ref{propertiesDuplicate}. Since we are interested in the concrete Alexandroff $n-$plicate $D_n(2^\mathbb{N})$, we only prove the fundamental properties of that space although some of these facts work for more general $D_n(X)$ spaces.
\begin{prop}
Given any natural number $n\geq 2$, the Alexandroff $n-$plicate $D_n(2^\mathbb{N})$ is a Rosenthal compactum.
\end{prop}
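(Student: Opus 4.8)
The plan is to realize $D_n(2^{\mathbb{N}})$ as a closed subspace of a countable product of known Rosenthal compacta, using the same strategy employed for $S_n(I)$. Recall that the class of Rosenthal compacta is closed under closed subspaces and countable products, and that $D(2^{\mathbb{N}})$ is itself a Rosenthal compactum. The key structural observation is that $D_n(2^{\mathbb{N}})$ differs from the classical duplicate $D(2^{\mathbb{N}})=D_2(2^{\mathbb{N}})$ only by the presence of $n-2$ extra copies of the fiber, each consisting of isolated points that ``split off'' from the point $(x,0)$.

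First I would define an embedding
$$\Phi: D_n(2^{\mathbb{N}})\longrightarrow D(2^{\mathbb{N}})\times A(2^{\mathbb{N}})^{n-2}$$
where $A(2^{\mathbb{N}})$ denotes the one-point compactification of the discrete space of size continuum indexed by $2^{\mathbb{N}}$, with point at infinity $\infty$. The map would send
$$\Phi(x,0)=((x,0),\infty,\ldots,\infty),\qquad \Phi(x,1)=((x,1),\infty,\ldots,\infty),$$
and for $i\in\{2,\ldots,n-1\}$ place the point $x$ in the $i$-th auxiliary coordinate:
$$\Phi(x,i)=((x,0),\infty,\ldots,\underbrace{x}_{i},\ldots,\infty).$$
This mirrors exactly the embedding constructed for $S_n(I)$ earlier in the paper, with $S_2(I)$ replaced by $D(2^{\mathbb{N}})$. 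Since $A(2^{\mathbb{N}})$ is a one-point compactification of a discrete set it is a Rosenthal compactum (indeed a scattered compactum of height two), so the target space is a Rosenthal compactum.

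Next I would verify that $\Phi$ is an embedding. Injectivity is immediate from the definition. For continuity it suffices to check it coordinatewise: the projection $\pi_1\circ\Phi$ onto the $D(2^{\mathbb{N}})$ factor is continuous because a basic neighborhood $\mathscr{U}\times\{0,\ldots,n-1\}\setminus\bigcup_{i=1}^{n-1}\{(x,i)\}$ of $(x,0)$ in $D_n(2^{\mathbb{N}})$ maps into the corresponding neighborhood of $(x,0)$ in $D(2^{\mathbb{N}})$, while the isolated points cause no trouble. For the auxiliary projections $\pi_i\circ\Phi$ with $i\geq 2$, the only non-isolated points to examine are those of the form $(x,0)$, where $\pi_i\circ\Phi(x,0)=\infty$; given a basic neighborhood $\mathcal{V}=(2^{\mathbb{N}}\setminus F)\cup\{\infty\}$ of $\infty$ with $F$ finite, the neighborhood $\mathcal{U}=D_n(2^{\mathbb{N}})\setminus\{(y,i):y\in F,\,i\in\{2,\ldots,n-1\}\}$ of $(x,0)$ satisfies $\pi_i\circ\Phi(\mathcal{U})\subset\mathcal{V}$, since removing finitely many isolated duplicate points suffices. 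Being a continuous injection from a compact space into a Hausdorff space, $\Phi$ is an embedding, and its image is closed; hence $D_n(2^{\mathbb{N}})$ is a Rosenthal compactum.

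The main obstacle, as in the $S_n(I)$ case, is the continuity verification at the points $(x,0)$, where one must confirm that the duplicate structure in the first coordinate and the ``collapse to $\infty$'' in the auxiliary coordinates are simultaneously compatible with a single basic neighborhood of $(x,0)$ in $D_n(2^{\mathbb{N}})$. The crucial point is that a basic neighborhood of $(x,0)$ already excludes all the isolated points $(x,i)$ attached to $x$, and that the auxiliary coordinates only need to avoid finitely many prescribed values of the index; since both constraints are met by deleting a finite set of isolated points, a common refining neighborhood exists. I expect this to be a routine but slightly delicate bookkeeping argument, entirely parallel to the proof already given for $S_n(I)$.
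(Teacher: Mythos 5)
Your embedding strategy is essentially the paper's own argument, up to a cosmetic change of target: the paper maps $D_n(2^\mathbb{N})$ into $2^\mathbb{N}\times A(2^\mathbb{N})^{n-1}$, sending $(x,0)\mapsto (x,\infty,\ldots,\infty)$ and recording \emph{every} isolated level $i\in\{1,\ldots,n-1\}$ in its own $A(2^\mathbb{N})$ coordinate, whereas you keep levels $0$ and $1$ together inside a $D(2^\mathbb{N})$ factor and use only $n-2$ auxiliary coordinates, mirroring the $S_n(I)\hookrightarrow S_2(I)\times A(I)^{n-2}$ proof. Both continuity checks are the same finite bookkeeping at the points $(x,0)$, and both are correct. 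The paper's choice is slightly more self-contained: it only needs that $2^\mathbb{N}$ and $A(2^\mathbb{N})$ are Rosenthal, while yours additionally invokes the fact that the Alexandroff duplicate $D(2^\mathbb{N})$ is Rosenthal (a fact the paper quotes from Todorcevic's work, and which the paper's own embedding actually re-proves as the case $n=2$).

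There is, however, a genuine gap: you never prove that $D_n(2^\mathbb{N})$ is compact (nor Hausdorff), and your argument leans on compactness twice --- ``a continuous injection from a compact space into a Hausdorff space is an embedding'' and ``its image is closed'' both require the domain to be compact. Since compactness is part of the conclusion ``is a Rosenthal compactum,'' it cannot be assumed; without it, a continuous bijection onto its image need not be a homeomorphism (compare $[0,1)$ mapping onto the circle), and the image need not be closed in the target. The paper closes this with a one-line covering argument that you should include: if $\mathcal{W}$ is an open cover of $D_n(X)$ with $X$ compact, then after taking a finite subcover of $X\times\{0\}$ only finitely many (isolated) points can remain uncovered, since an infinite set of them would accumulate to $X\times\{0\}$. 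With that observation (and the trivial Hausdorff check) added, your proof is complete.
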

\begin{proof}
The space is clearly Hausdorff. The space $D_n(X)$ is compact whenever $X$ is compact, because if $\mathcal{W}$ is an open cover, after we take a finite subcover of $X\times \{0\}$, only finitely many points can remain, or otherwise they would accumulate to $X\times\{0\}$. To see that it is a Rosenthal compactum, let us consider the map $\Phi: D_n(2^\mathbb{N})\longrightarrow 2^\mathbb{N}\times A(2^\mathbb{N})^{n-1}$ given by $$\Phi(x,0)=(x,\infty,\ldots, \infty)$$ $$\Phi(x,i)=(x,\infty,\ldots,\underbrace{x}_{i}, \ldots, \infty).$$ Since $2^\mathbb{N}\times A(2^\mathbb{N})^{n-1}$ is a Rosenthal compactum it will be enough to see that $\Phi$ is an embedding and, in fact, it will be enough to see that $\Phi$ is continuous on the points $(x,0)$. A basic neighborhood of $\Phi(x,0)$ is of the form
$$\mathscr{U}\times\prod_{i=1}^{n-1}\left( 2^\mathbb{N}\setminus F_i\cup\{\infty\}\right)$$
where $\mathscr{U}$ is a neighbourhood of $x$ in $2^\mathbb{N}$ and $F_i$ are finite subsets of $2^\mathbb{N}$. Let $$F = \bigcup_{i=1}^{n-1}\{(x,i)\}\cup \{(y,j) : i,j\in\{1,\ldots, n-1\}, y\in F_i\}$$ and consider
$\mathscr{V} = \mathscr{U}\times\{0,\ldots, n-1\}\setminus F$ which is a neighborhood of $(x,0)$ that satisfies
$$\Phi(\mathscr{V})\subseteq \mathscr{U}\times\prod_{i=1}^{n-1}\left(\{y:y\in K\setminus F_i\}\cup\{\infty\}\right)$$
and thus $\Phi$ is a continuous map.
\end{proof}
\begin{prop}\label{degreeofDn}
Given any natural number $n\geq 2$ the Alexandroff $n-$plicate $D_n(2^\mathbb{N})$ is a Rosenthal compactum of degree $n$ that is not of degree $n-1$.
\end{prop}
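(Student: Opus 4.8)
The plan is to treat the two assertions separately, closely following the template of Theorem~\ref{degreeNSplit}.

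For the upper bound, I would exhibit the obvious witness: the projection onto the first coordinate $\pi : D_n(2^\mathbb{N}) \to 2^\mathbb{N}$, $(x,i)\mapsto x$. This map is continuous, since the basic neighbourhood $\mathscr{U}\times\{0,\ldots,n-1\}\setminus\bigcup_{i=1}^{n-1}\{(x,i)\}$ of a limit point $(x,0)$ is sent into $\mathscr{U}$, and all other points are isolated. It is a surjection onto the metric compactum $2^\mathbb{N}$, and each fibre $\pi^{-1}(x)=\{(x,0),\ldots,(x,n-1)\}$ has exactly $n$ elements, so $\pi$ witnesses that $D_n(2^\mathbb{N})$ is of degree $n$.

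For the lower bound I would invoke Proposition~\ref{characterizationdegree}. First I realise $D_n(2^\mathbb{N})$ as a subspace of a product $\mathbb{R}^{\mathscr{F}}$ through a separating family $\mathscr{F}$ of continuous functions built from two kinds of indicators: the functions $\mathds{1}_{C\times\{0,\ldots,n-1\}}$ for $C$ clopen in $2^\mathbb{N}$, and the singleton indicators $\mathds{1}_{\{(z,i)\}}$ for $z\in 2^\mathbb{N}$ and $i\in\{1,\ldots,n-1\}$. The first kind separates points with distinct first coordinates, while the second kind separates $(x,i)$ from $(x,j)$ and from $(x,0)$; together they make the evaluation map $e$ an embedding into $\mathbb{R}^{\mathscr{F}}$. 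The decisive step is then to check that \emph{no} countable subfamily yields an at most $(n-1)$-to-$1$ projection. Given a countable $N\subseteq\mathscr{F}$, involving clopen sets $C_k$ and singletons centred at points $z_k$, the key observation is that each $\mathds{1}_{C_k\times\{0,\ldots,n-1\}}$ depends only on the first coordinate, whereas each $\mathds{1}_{\{(z_k,i_k)\}}$ vanishes at every point whose first coordinate is not some $z_k$. Since $2^\mathbb{N}$ is perfect and $\{z_k:k\in\mathbb{N}\}$ is countable, I can choose $x\in 2^\mathbb{N}\setminus\{z_k:k\in\mathbb{N}\}$; then the $n$ points $(x,0),\ldots,(x,n-1)$ all receive identical images under $\pi_N$, so $\pi_N$ is at least $n$-to-$1$. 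By Proposition~\ref{characterizationdegree} this shows $D_n(2^\mathbb{N})$ is not of degree $n-1$.

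The only routine verifications are that $\mathscr{F}$ separates points and consists of continuous functions; for the singleton indicators the latter rests solely on points being closed, which holds because $D_n(2^\mathbb{N})$ is compact Hausdorff. I do not expect a genuine obstacle here, since the whole argument is the exact analogue of the $S_n(I)$ case, the heart of it being the same avoidance argument: a countable set cannot meet every point of the uncountable perfect space $2^\mathbb{N}$, which forces the entire fibre over the avoided point to collapse.
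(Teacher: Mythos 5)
Your proposal is correct and takes essentially the same route as the paper: the first-coordinate projection gives the upper bound, and for the lower bound you embed $D_n(2^\mathbb{N})$ into $\mathbb{R}^{\mathscr{F}}$ via indicators of clopen sets and of isolated singletons, then run the same avoidance argument (pick $x$ outside the countably many singleton centres, so the whole fibre $\{(x,0),\ldots,(x,n-1)\}$ collapses under $\pi_N$), exactly as in the paper's appeal to Proposition~\ref{characterizationdegree}. One minor point of justification: continuity of $\mathds{1}_{\{(z,i)\}}$ requires the singleton to be \emph{clopen}, which uses that the points $(z,i)$, $i\in\{1,\ldots,n-1\}$, are isolated (openness) and not merely that the space is Hausdorff (closedness); since you correctly restricted these indicators to $i\neq 0$, this does not affect the argument.
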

\begin{proof}
It is obvious that $D_n(2^\mathbb{N})$ is a Rosenthal compactum of degree $n$ since the corresponding projection onto $2^\mathbb{N}$ is a $n$-to-1 continuous map. To see that $D_n(2^\mathbb{N})$ is not of degree $n-1$ we proceed in the same way as in Theorem \ref{degreeNSplit}. Since $2^\mathbb{N}$ is zero dimensional and second-countable space, we can take a countable clopen basis $\{\mathscr{A}_m\}_{m\in\mathbb{N}}$ and thus if $\mathscr{F}=\{\mathds{1}_{\mathcal{A}_m\times\{0,\ldots, n-1\}}\}_{m\in\mathbb{N}}\cup \{\mathds{1}_{\{(x,i)\}}\}_{x\in 2^\mathbb{N},i\in\{1,\ldots, n-1\}}$ then the evaluation map $e:(x,i)\mapsto e_{(x,i)}$ establish an embedding between $D_n(2^\mathbb{N})$ and $\mathbb{R}^\mathscr{F}$.

Suppose that $D_n(2^\mathbb{N})$ is of degree $n-1$. Then by Proposition \ref{characterizationdegree} there exists a countable subset $N$ of $\mathscr{F}$ for which the corresponding projection $\pi_N$ is at most $(n-1)$-to-$1$. We can suppose that $N$ is of the form $N=\{\mathds{1}_{\mathcal{A}_{m_k}\times\{0,\ldots, n-1\}}\}_{k\in\mathbb{N}}\cup \{\mathds{1}_{\{(x_k,i_k)\}}\}_{k\in\mathbb{N}}$. We can pick a point $x\in 2^\mathbb{N}$ for which $x\neq x_k$ for every $k\in\mathbb{N}$. Then we have that, for all $k\in\mathbb{N}$ and all $i\in\{0,\ldots,n-1\}$,
$$e_{(x,i)}(\mathds{1}_{\{(x_k,i_k)\}})=0,$$
$$e_{(x,i)}(\mathds{1}_{\mathcal{A}_{m_k}\times\{0,\ldots, n-1\}})= \mathds{1}_{\mathcal{A}_{m_k}}(x).$$
Therefore $\pi_N$ is at least $n$-to-$1$ because all points $(x,i)$ have the same image. A contradiction.
\end{proof}
We finish the present section proving that the Alexandroff $n-$plicate is a monolithic space. As we shall see later, this will imply that none of the compact spaces $S_n(I)$ and $D_n(2^\mathbb{N})$ contain copies of the other. 
\begin{prop}
Given any natural number $n\geq 2$ the Alexandroff $n-$plicate $D_n(2^\mathbb{N})$ is a monolithic space.
\end{prop}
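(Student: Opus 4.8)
The plan is to show directly that every separable subspace $S\subseteq D_n(2^\mathbb{N})$ is second countable; since $S$ is a subspace of the compact Hausdorff space $D_n(2^\mathbb{N})$ it is in particular regular and Hausdorff, so Urysohn's metrization theorem will then give that $S$ is metrizable. First I would fix a countable dense set $D\subseteq S$ and extract the crucial consequence of separability. The isolated points of $D_n(2^\mathbb{N})$ are exactly the points $(x,i)$ with $i\in\{1,\ldots,n-1\}$, and any such point lying in $S$ is isolated in $S$ as well; since every isolated point of a space must belong to each of its dense subsets, the set $S^{*}:=S\cap(2^\mathbb{N}\times\{1,\ldots,n-1\})$ is contained in $D$ and is therefore countable. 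In particular the projection $P\subseteq 2^\mathbb{N}$ of $S^{*}$ onto the first coordinate is countable.

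Next I would assemble an explicit countable base for $S$. Fix a countable base $\{\mathscr{A}_m\}_{m\in\mathbb{N}}$ of $2^\mathbb{N}$. Each point of $S^{*}$ is isolated in $S$ and contributes a singleton, giving countably many basic open sets. For a point of the form $(x,0)\in S$, its neighbourhoods in $D_n(2^\mathbb{N})$ are of the form $\mathscr{A}_m\times\{0,\ldots,n-1\}\setminus\bigcup_{i=1}^{n-1}\{(x,i)\}$ with $x\in\mathscr{A}_m$; upon intersecting with $S$, only the points $(x,i)$ that actually lie in $S^{*}$ get removed. Hence if $x\notin P$ the trace on $S$ of such a neighbourhood is simply $(\mathscr{A}_m\times\{0,\ldots,n-1\})\cap S$, of which there are only countably many, while if $x\in P$ the deleted fibre $\{(x,1),\ldots,(x,n-1)\}$ ranges over a countable collection because $P$ is countable. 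Collecting these three countable families yields a base for the topology of $S$: given an open $U\ni(x,0)$, choose $\mathscr{A}_m\ni x$ with the corresponding basic neighbourhood of $(x,0)$ contained in $U$, and observe that its trace on $S$ is one of the listed sets according to whether $x\in P$ or not. Thus $S$ is second countable.

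The only real subtlety, and the step I would treat most carefully, is the control of the isolated points. A priori the neighbourhood filter of each point $(x,0)$ depends on $x$ through the deleted fibre $\{(x,1),\ldots,(x,n-1)\}$, and there are continuum many such $x$, so a naive count would produce an uncountable base. The point is that this deletion is visible inside $S$ only at the countably many first coordinates $x\in P$, and this is precisely where separability of $S$ enters. Once second countability is established, regularity of $S$, inherited from the ambient compact Hausdorff space, together with Urysohn's metrization theorem finishes the proof.
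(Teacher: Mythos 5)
Your proof is correct, but it takes a slightly different route from the paper's. The paper works with the closure $\overline{Z}$ of a countable set $Z$: since the points of $\overline{Z}\setminus Z$ all lie in $2^\mathbb{N}\times\{0\}$, the countable family $\{B\times\{0,\ldots,n-1\} : B\in\mathcal{B}\}\cup\{\{(x,i)\} : (x,i)\in Z,\ i\neq 0\}$ (with $\mathcal{B}$ a countable basis of $2^\mathbb{N}$) separates the points of the compact space $\overline{Z}$, and metrizability follows from the standard criterion that a compact Hausdorff space with a countable point-separating family of open (here even clopen) sets is metrizable. You instead work directly with the separable subspace $S$ itself: you use the dense countable set to see that the isolated points of $S$ are countable, build from this an explicit countable base for $S$ (singletons, traces of $\mathscr{A}_m\times\{0,\ldots,n-1\}$, and traces of the punctured neighbourhoods over the countably many relevant fibres), and then invoke Urysohn's metrization theorem using the regularity inherited from the ambient compact Hausdorff space. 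Both arguments pivot on the same structural facts --- non-isolated points live in $2^\mathbb{N}\times\{0\}$ and separability tames the isolated points --- but your version buys independence from any compactness-based metrization criterion (you never need to take closures or know they are compact), at the cost of a more hands-on verification that your family is a base; the paper's version buys brevity, since point-separation is easier to check than the base property, at the cost of appealing to the metrization criterion for compacta.
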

\begin{proof}
Let $Z$ be a countable subset of $D_n(2^\mathbb{N})$, and let $\mathcal{B}$ be a countable basis of open subsets of $2^{\mathbb{N}}$. Then,
$$\{ B\times \{0,1,\ldots,n-1\} : B\in \mathcal{B}\} \cup \{\{(x,i)\} : (x,i)\in Z, i\neq 0\}$$
is a countable family of open sets that separates the points of the compact space $\overline{Z}$ (notice that all points of $\overline{Z}\setminus Z$ live in $2^\mathbb{N}\times \{0\}$). Hence $\overline{Z}$ is metrizable. 
\end{proof}
\begin{cor}
Given any natural number $n\geq 2$ then $S_n(I)$ does not contain copies of $D_n(2^\mathbb{N})$ and vice versa.
\end{cor}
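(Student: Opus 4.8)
The plan is to prove the two non-containments separately. The easy direction is that $D_n(2^\mathbb{N})$ contains no copy of $S_n(I)$: as noted in the proof that $S_n(I)$ is compact, the subspace $I\times\{0,1\}$ of $S_n(I)$ is homeomorphic to $S_2(I)$, which is separable (it is $S(I)$ together with two isolated points) and non-metrizable (it contains the non-metrizable $S(I)$). Hence any homeomorphic copy of $S_n(I)$ contains a separable non-metrizable subspace, whereas every separable subspace of the monolithic space $D_n(2^\mathbb{N})$ is metrizable; so no such copy can sit inside $D_n(2^\mathbb{N})$.

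For the reverse inclusion I would argue by contradiction, assuming that $\psi:D_n(2^\mathbb{N})\to S_n(I)$ is an embedding. The set of non-isolated points of $D_n(2^\mathbb{N})$ is $2^\mathbb{N}\times\{0\}$, which is perfect, so its image under $\psi$ is a perfect subspace of $S_n(I)$; since every point of $S_n(I)$ at a level $i\geq 2$ is isolated, a perfect subspace must avoid all of them, and therefore $\psi(x,0)\in I\times\{0,1\}$ for every $x\in 2^\mathbb{N}$. Writing $\pi:S_n(I)\to I$ for the projection onto the first coordinate, I set $u(x)=\pi(\psi(x,0))$, a continuous map $u:2^\mathbb{N}\to I$, and I split $2^\mathbb{N}=L\sqcup R$ according to whether $\psi(x,0)$ has level $0$ or level $1$. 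The point of this is that the basic neighbourhoods of a level-$0$ point of $S_n(I)$ contain only points of strictly smaller first coordinate, and those of a level-$1$ point only points of strictly larger first coordinate. Thus if $x\in L$ and $x_k\to x$ with $x_k\neq x$, then $(x_k,0)\to(x,0)$ in $D_n(2^\mathbb{N})$, so $\psi(x_k,0)\to\psi(x,0)$ and hence $u(x_k)<u(x)$ eventually; that is, $u$ has a strict local maximum at each point of $L$, and symmetrically a strict local minimum at each point of $R$.

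To finish I would invoke the following elementary fact: a continuous real-valued function on a second-countable space has at most countably many strict local maxima, since sending each such point to a basic open set on which it is the unique maximiser is injective, and there are only countably many such sets. The same holds for strict local minima, so $L$ and $R$ are both countable, contradicting $|2^\mathbb{N}|=\mathfrak{c}$. I expect this reverse direction to be the main obstacle, and within it the crucial move is the passage from the embedding to the assertion that $u$ attains a strict local extremum at every point of the perfect set $2^\mathbb{N}$; this is exactly where the one-sidedness of the order topology on $S_n(I)$ is used, in contrast with the two-sided accumulation of the duplicated points in $D_n(2^\mathbb{N})$. Once that dichotomy is established the countability of strict local extrema closes the proof, while the first direction is routine given monolithicity.
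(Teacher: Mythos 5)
Your proposal is correct, and while your first half coincides with the paper's argument, your second half takes a genuinely more self-contained route. For the direction $S_n(I)\not\hookrightarrow D_n(2^\mathbb{N})$ you argue exactly as the paper does: any copy of $S_n(I)$ contains a separable non-metrizable subspace (you use $I\times\{0,1\}\cong S_2(I)$, the paper uses $S(I)$), which is impossible inside the monolithic space $D_n(2^\mathbb{N})$. For the direction $D_n(2^\mathbb{N})\not\hookrightarrow S_n(I)$, both you and the paper start with the same observation: the set $2^\mathbb{N}\times\{0\}$ of non-isolated points is perfect, so its image under an embedding avoids every isolated point of $S_n(I)$ and lands in $I\times\{0,1\}$. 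At that point the paper stops, declaring a contradiction with Proposition \ref{propertiesS(I)} --- really with the remark, stated without proof in Section \ref{sec::section3}, that the only metrizable subspaces of $S(I)$ are the countable ones --- whereas you prove the needed fact from scratch: composing with the first-coordinate projection yields a continuous $u:2^\mathbb{N}\to I$ having a strict local maximum at every point sent to level $0$ and a strict local minimum at every point sent to level $1$ (this is where the one-sided basic neighbourhoods of $S_n(I)$ enter; note that perfectness of the image also rules out the isolated endpoints $(0,0)$ and $(1,1)$, so such neighbourhoods always exist, and first countability of $2^\mathbb{N}$ turns your sequential claim into a genuine local extremum), and then you count: on a second-countable space a function has at most countably many strict local extrema, since mapping each one to a basic open set on which it is the unique extremizer is injective. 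This contradicts $|2^\mathbb{N}|=\mathfrak{c}$. So your version buys self-containedness --- it is in effect an elementary proof of the assertion the paper merely cites, and it works directly in $S_n(I)$ without reducing to $S(I)$ --- at the cost of somewhat more work; the paper's version is shorter but leans on that earlier unproved remark.
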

\begin{proof}
If $S_n(I)\hookrightarrow D_n(2^\mathbb{N})$, since the Alexandroff $n-$plicate is a monolithic space and $S(I)$ is a separable subspace of $S_n(I)$ (remember that $S(I)$ is the result of removing two isolated points from $S_2(I)$), then $S(I)$ would be metrizable contradicting Proposition \ref{propertiesS(I)}. On the other hand, if $D_n(2^\mathbb{N})\hookrightarrow S_n(I)$, then the non-isolated points of $D_n(2^\mathbb{N})$ embed inside the non-isolated points of $S_n(I)$. That would give a copy of $2^\mathbb{N}$ inside $S(I)$ in contradiction, again, with Proposition \ref{propertiesS(I)}.
\end{proof}
\section{The proof of the main result}\label{sec::section4}

The present section will be devoted to the proof of our main result. First of all, it is convenient to point out that the assumption of separability is essential. Indeed, let us consider the Rosenthal compact space $K=D_n(2^\mathbb{N})\setminus \mathscr{B}$ where $\mathscr{B}=B\times\{1,\ldots,n-1\}$ and $B$ is a Bernstein set (that is, a set such that $C\cap B$ and $C\setminus B$ are uncountable for all uncountable Borel sets). The same proof as in Proposition~\ref{degreeofDn} shows that $K$ is of degree $n$ but not $n-1$. It is clear that $K$ does not contain copies of $S_n(I)$, not even $S_2(I)$, since neither does $D_n(2^\mathbb{N})$. On the other hand, $K$ neither contains copies of even $D_n(2^\mathbb{N})$, not even $D_2(2^\mathbb{N})$. One way to see this is to use the fact that for every continuous function $f:D_2(2^\mathbb{N})\longrightarrow 2^\mathbb{N}$ there exists a perfect subset $P\subset 2^\mathbb{N}$ such that $|f^{-1}(p)|=2$ for all $p\in P$. The restriction of the first coordinate $f:X\longrightarrow 2^\mathbb{N}$, $f(x,i) = x$ has points with only one preimage on any perfect set, so we conclude that $D_2(2^\mathbb{N})$ does not embed into $X$. The proof of the above fact is an elementary excercise: Por every clopen set $A\subset 2^\mathbb{N}$, write its clopen preimage in the form 
$$f^{-1}(A) = C_A \times \{0,1\}\bigtriangleup F_A \times\{1\},$$ where $C_A\subset 2^\mathbb{N}$ is clopen, $F_A$ is finite, and $X \bigtriangleup Y$ represents the symmetric difference $(X\setminus Y)\cup (Y\setminus X)$. It is enough to take $P$ a perfect set disjoint from all the $F_A$'s.

As in the proof of the classical result for separable Rosenthal compacta of degree $2$, we will require the Ramsey-like results used in \cite{Tod}. For the sake of completeness, we state both in the following lines.
\begin{teo}[S. Todorcevic]\label{Ramsey}
Let $\left\{f_s : s\in 2^{<\mathbb{N}}\right\}$ be a relatively compact subset of Baire class 1 functions defined on a Polish space $X$. Then there is a perfect Polish space $P\subset 2^\mathbb{N}$ and an infinite strictly increasing sequence $\{m_k\}_k$ of natural numbers such that $\{f_{a\upharpoonright m_k}\}_k$ is pointwise convergent for every $a\in P$.
\end{teo}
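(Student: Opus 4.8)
The plan is to combine the sequential compactness of relatively compact families of Baire class $1$ functions (the Bourgain--Fremlin--Talagrand theorem, \cite{BFT}) with a Cantor scheme / fusion construction carried out inside the index tree $2^{<\mathbb{N}}$.

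First I would reduce to the case of continuous functions. Since $\{f_s : s\in 2^{<\mathbb{N}}\}$ is countable, the same observation used for Proposition~\ref{separableK} lets me refine the Polish topology of $X$ to a larger Polish topology in which every $f_s$ is continuous; pointwise convergence of a sequence $(f_{s_k})_k$ is convergence in $\mathbb{R}^X$, i.e. coordinatewise, so it does not depend on the topology chosen for $X$. Thus from now on all $f_s$ may be assumed continuous and, writing $K$ for the pointwise closure of $\{f_s\}$, the Bourgain--Fremlin--Talagrand theorem guarantees that $K$ is a sequentially compact, Fréchet--Urysohn Rosenthal compactum: every sequence drawn from $\{f_s\}$ has a pointwise convergent subsequence. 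This is the engine of the whole argument.

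Next I would realise the perfect set $P$ and the level sequence $\{m_k\}_k$ through a Cantor scheme $\{t_\sigma : \sigma\in 2^{<\mathbb{N}}\}$ of nodes of $2^{<\mathbb{N}}$, built by recursion on $|\sigma|=k$, with the uniformity that all $t_\sigma$ with $|\sigma|=k$ share one common length $m_k$ (padding all extensions to a common length), and with $t_{\sigma\frown 0},\,t_{\sigma\frown 1}$ incomparable proper extensions of $t_\sigma$ (so the $m_k$ are strictly increasing). The splitting makes $\alpha\mapsto a_\alpha:=\bigcup_k t_{\alpha\upharpoonright k}$ a homeomorphic embedding of $2^{\mathbb{N}}$, whence $P:=\{a_\alpha:\alpha\in 2^{\mathbb{N}}\}$ is perfect and $a_\alpha\upharpoonright m_k=t_{\alpha\upharpoonright k}$; hence $f_{a_\alpha\upharpoonright m_k}=f_{t_{\alpha\upharpoonright k}}$, and the theorem reduces to arranging that along every branch the chain $(f_{t_{\alpha\upharpoonright k}})_k$ converges pointwise on all of $X$. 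At each stage of the recursion only finitely many nodes are present, so sequential compactness can be applied finitely often to refine and extend them.

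The hard part will be this last requirement: securing pointwise convergence at \emph{every} point of $X$ along every one of the continuum-many branches, by means of a single countable recursion, when $K$ is typically non-metrizable. Controlling the functions on a fixed countable dense $D\subseteq X$ is routine — a fusion making consecutive functions agree within $2^{-k}$ at the first $k$ points of $D$ yields convergence on $D$ — but convergence on a dense set does \emph{not} upgrade to convergence everywhere, since the limits are merely of the first Baire class. This is precisely where Todorcevic's Ramsey-theoretic machinery is genuinely needed. One would measure the failure of convergence of a chain by an oscillation index, a countable ordinal attached to its possible multiple cluster points: if a chain has distinct cluster points $h_1\neq h_2$, there are $x_0$ with $h_1(x_0)\neq h_2(x_0)$ and rationals $p<q$ so that the continuous $f_s$ oscillate between $\{x:g(x)<p\}$ and $\{x:g(x)>q\}$ near $x_0$, and the angelic/Fréchet--Urysohn character of $K$ makes this index well defined. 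One then interleaves into the fusion steps that, at each splitting, either trivialise a new such test-pair or strictly lower the index; a well-foundedness argument forces stabilisation along every branch, hence a unique cluster point there, i.e. genuine pointwise convergence. Carrying out this oscillation/rank bookkeeping \emph{uniformly} over the whole scheme, so that the same levels $m_k$ serve all branches at once, is the main obstacle, and it is exactly the content supplied by \cite{Tod}.
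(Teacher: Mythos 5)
The first thing to note is that the paper contains no proof of Theorem \ref{Ramsey} at all: it is quoted as a known result of Todorcevic, with \cite{Tod} as the source, and used as a black box (``we state both in the following lines''). So there is no argument in the paper to match yours against; the only question is whether your proposal is a self-contained proof. It is not, although its preliminary reductions are correct and worth recording: refining the Polish topology of $X$ to make the countably many $f_s$ continuous (the same trick that underlies Proposition \ref{separableK}), observing that pointwise convergence and relative compactness in $\mathbb{R}^X$ are unaffected by this refinement, invoking \cite{BFT} for sequential compactness of the closure, and encoding the desired conclusion as a strong subtree of $2^{<\mathbb{N}}$ with level set $\{m_k\}_k$ whose branches form the perfect set $P$. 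All of that is routine and sound.

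The gap is that everything which makes the theorem a theorem sits in what you call ``the hard part,'' and there your text stops being a proof. You sketch an oscillation-index fusion (``either trivialise a new test-pair or strictly lower the index'') and then concede that carrying this out uniformly over the whole scheme ``is exactly the content supplied by \cite{Tod}.'' Since Theorem \ref{Ramsey} \emph{is} the content of \cite{Tod} being quoted, this is circular: you have reduced the theorem to itself. Moreover, the sketch cannot be completed in the form described. A fusion is a countable construction, so it accommodates only countably many rank-lowering events and test pairs, while $P$ has continuum many branches, each a priori requiring its own well-foundedness/stabilisation argument at each of the (possibly uncountably many) points of $X$; and, as you yourself note, the one thing a countable fusion does secure --- convergence on a countable dense subset of $X$ --- does not upgrade to convergence everywhere for Baire class $1$ limits. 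The missing idea is a genuinely parametrized partition theorem for trees (of Halpern--L\"auchli/Milliken strong-subtree type, combined with the sequential compactness from \cite{BFT}), which is what allows a single countable construction to control all $2^{\aleph_0}$ branches simultaneously; ordinal-rank bookkeeping inside a standard fusion is not a substitute for it, and no version of that combinatorial core appears in your proposal.
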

\begin{teo}[F. Galvin]\label{Ramsey2}
For every perfect Polish space $X$ and every symmetric analytic relation $A\subset X^2$ there is a perfect set $P\subset X$ such that $P^{[2]}=\{(x,y)\in P^2:\; x\neq y\}$ is either disjoint from or included in $A$.
\end{teo}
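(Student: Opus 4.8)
The plan is to reduce the statement to a category argument combined with a fusion (Cantor scheme) construction. First I would replace the hypothesis \emph{analytic} by \emph{has the Baire property}: by the classical theorem of Lusin and Sierpi\'nski every analytic subset of a Polish space has the Baire property, so both $A$ and its off-diagonal complement $B=\{(x,y)\in X^2:x\neq y\}\setminus A$ are symmetric sets with the Baire property (see \cite{Kec}). It is also convenient, although not essential, to fix a perfect subset of $X$ homeomorphic to $2^{\mathbb{N}}$ and to work inside it, since a perfect subset of such a set is perfect in $X$ and the Baire property is inherited; thus one may as well assume $X=2^{\mathbb{N}}$.

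Next I would split into two cases according to a category dichotomy. In the first case there is a nonempty open set $U\subseteq X$ with $A\cap(U\times U)$ meager in $U\times U$. Then $A\cap U^2$ is a meager subset of the perfect Polish space $U$, and Mycielski's theorem (\cite{Kec}) produces a perfect set $P\subseteq U$ such that $(x,y)\notin A$ for all distinct $x,y\in P$; that is, $P^{[2]}$ is disjoint from $A$. In the second case, for every nonempty open $U$ the set $A\cap(U\times U)$ is non-meager, and here I aim to produce a perfect $P$ with $P^{[2]}\subseteq A$.

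In the second case I would build a Cantor scheme $\{V_s\}_{s\in 2^{<\mathbb{N}}}$ of nonempty open sets with $\overline{V_{s\frown i}}\subseteq V_s$, disjoint siblings and vanishing diameters, maintaining the invariant that $A$ is comeager in $V_s\times V_t$ for all incomparable $s,t$ of the same length. The cross-node requirements come for free, since comeagerness in an open box is inherited by every open sub-box; the only genuine step is the sibling split, where inside $V_s\times V_s$ the non-meagerness hypothesis of this case together with the Baire property yields a basic box on which $A$ is comeager, and, using that $X$ is perfect, this box may be shrunk to a pair of \emph{disjoint} open sets $V_{s\frown 0},V_{s\frown 1}$ on whose product $A$ is still comeager. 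The Kuratowski--Ulam theorem is the tool that legitimizes these comeagerness manipulations. The resulting set $P=\bigcap_n\bigcup_{|s|=n}V_s$ is perfect, and any two distinct branches separate at some level into incomparable nodes.

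The main obstacle is precisely the last step: comeagerness of $A$ in $V_s\times V_t$ does \emph{not} by itself place the concrete pair of branch points in $A$. I would resolve this by folding a Mycielski-type avoidance into the same fusion: there are only countably many exceptional meager sets $(V_s\times V_t)\setminus A$, and at each finite stage I would further shrink the finitely many current clopen pieces so as to dodge fixed nowhere-dense approximations of these sets, exactly as in the proof of Mycielski's theorem in \cite{Kec}. In the limit every pair of distinct branch points then avoids all the exceptional meager sets and hence lies in $A$, giving $P^{[2]}\subseteq A$. Combining the two cases yields the required perfect homogeneous set.
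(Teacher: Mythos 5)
The paper does not actually prove this statement: Theorem~\ref{Ramsey2} is quoted as a known result of F.~Galvin, stated ``for the sake of completeness'' alongside Theorem~\ref{Ramsey}, with its proof delegated to the literature (essentially \cite{Kec}, \cite{TodRam}). So there is no internal argument to compare against; judged on its own, your proposal is correct, and it is in substance the classical proof of Galvin's theorem as found in \cite{Kec}: Lusin--Sierpi\'{n}ski to pass from analytic sets to sets with the Baire property, the category dichotomy on boxes $U\times U$, Mycielski's theorem in the meager case, and in the everywhere-nonmeager case a Cantor scheme maintaining comeagerness of $A$ on products $V_s\times V_t$ of incomparable nodes, with a Mycielski-style avoidance of the countably many nowhere dense pieces of the sets $(V_s\times V_t)\setminus A$ folded into the fusion so that the actual pairs of branch points land in $A$. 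You correctly isolated and repaired the one step where such arguments typically fail, namely that comeagerness on a box says nothing about a particular pair of branch points. One slip deserves mention: your justification for working inside a copy of $2^{\mathbb{N}}$ --- that ``the Baire property is inherited'' by subspaces --- is false in general (any subset of a nowhere dense Cantor set has the Baire property in the ambient space, yet a Bernstein subset of that Cantor set fails it relatively). This is harmless here: either drop the reduction, which as you note is inessential since the fusion runs in any perfect Polish space, or restrict \emph{before} invoking Lusin--Sierpi\'{n}ski, observing that the trace of an analytic set on a closed subspace is analytic in that subspace and hence has the Baire property there.
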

\subsection{The proof}
In the sequel, $n\geq 2$ will be a natural number, $K\subset \boldsymbol{\mathcal{B}_1}(\mathbb{N}^\mathbb{N})$ will be a separable Rosenthal compactum of degree $n$ but not of degree $n-1$ and $K_0$ will be a countable dense subspace of $K$ made of continuous functions (remember Proposition~\ref{separableK}). 

Since $K$ is of degree $n$, by virtue of Proposition \ref{characterizationdegree}, there exists a countable subset $D_0\subset \mathbb{N}^\mathbb{N}$ such that $\pi_{D_0}$ is at most $n$-to-$1$. With this in mind we will build, as it was done in \cite{Tod}, an $\omega_1-$sequence of functions $\{(f_\alpha^0,\ldots, f^{n-1}_\alpha)\}_{\alpha<\omega_1}$ on $K^n$ and an $\omega_1$-sequence of points $\{(x^{ij}_\alpha)_{ij}:\,0\leq i\neq j\leq n-1,\,\alpha<\omega_1\}$ on $(\mathbb{N}^\mathbb{N})^{n^2-n}$ with the following properties:
\begin{description}
\item[(1)]$f^i_\alpha\upharpoonright_{D_0}=f^j_\alpha\upharpoonright_{D_0}$ for every $i,j\in\{0,\ldots, n-1\}$.
\item[(2)]$f^i_\alpha(x^{ij}_\alpha)\neq f^{j}_\alpha(x^{ij}_\alpha)$ and $x^{ij}_\alpha=x^{ji}_\alpha$ for all $\alpha<\omega_1$ and $i,j\in\{0,\ldots, n-1\}$ with $i\neq j$.
\item[(3)]$f^k_\alpha(x^{ij}_\beta)=f^l_\alpha(x^{ij}_\beta)$ for all $\beta<\alpha$ and $i,j,k,l\in\{0,\ldots, n-1\}$ with $i\neq j$.
\item[(4)]$f^i_\alpha\upharpoonright_{D_0}\neq f^j_\beta\upharpoonright_{D_0}$ when $\alpha\neq \beta$ and $i,j\in\{0,\ldots, n-1\}$.
\end{description}
\begin{prop}
There exist two $\omega_1-$sequences $$\{(f_\alpha^0,\ldots, f^{n-1}_\alpha)\}_{\alpha<\omega_1},$$ $$\{(x^{01}_\alpha, \ldots, x^{(n-1)(n-2)}_\alpha)\}_{\alpha<\omega_1},$$ as above, which satisfy \textbf{(1)-(4)}.
\end{prop}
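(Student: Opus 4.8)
The plan is to construct both sequences simultaneously by transfinite recursion on $\alpha<\omega_1$, folding the witnessing points chosen at earlier stages into the countable set on which agreement is required, so that property \textbf{(3)} becomes automatic. The engine of the construction is the following observation, which I would isolate as a lemma: for every countable set $D$ with $D_0\subseteq D\subset\mathbb{N}^\mathbb{N}$, the map $\pi_D:K\longrightarrow\{f\upharpoonright_D:f\in K\}$ has uncountably many fibers of size exactly $n$. This is where the hypothesis that $K$ is of degree $n$ but not $n-1$ is spent.

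To prove the lemma I would argue by contradiction. First, $\pi_D$ is at most $n$-to-$1$ since it refines $\pi_{D_0}$. If $\pi_D$ had only countably many fibers of size exactly $n$, then for each such fiber $\{g_0,\ldots,g_{n-1}\}$ I would pick a single point $z\in\mathbb{N}^\mathbb{N}$ with $g_0(z)\neq g_1(z)$; adjoining these countably many points to $D$ produces a countable set $D'$ for which every fiber of $\pi_{D'}$ has at most $n-1$ elements, since the old size-$n$ fibers get split while smaller fibers stay small. By Proposition~\ref{characterizationdegree} this would make $K$ of degree $n-1$, contrary to hypothesis. The case $D=D_0$ shows in particular that size-$n$ fibers exist at all. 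A useful bookkeeping remark is that, because every fiber of $\pi_D$ is contained in a fiber of $\pi_{D_0}$ of size at most $n$, a fiber of $\pi_D$ of size exactly $n$ must coincide with the whole $D_0$-fiber containing it; hence distinct size-$n$ fibers of $\pi_D$ have distinct restrictions to $D_0$.

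With the lemma in hand, at stage $\alpha$ I would set $D_\alpha=D_0\cup\{x_\beta^{ij}:\beta<\alpha,\ 0\le i\neq j\le n-1\}$, which is countable. Applying the lemma to $D_\alpha$ yields uncountably many size-$n$ fibers of $\pi_{D_\alpha}$, and by the bookkeeping remark these have pairwise distinct $D_0$-restrictions; since only countably many values $f_\beta^j\upharpoonright_{D_0}$ with $\beta<\alpha$ have been used so far, I can choose one such fiber whose common $D_0$-restriction differs from all of them, and enumerate its $n$ members as $f_\alpha^0,\ldots,f_\alpha^{n-1}$. This secures \textbf{(1)} (the $f_\alpha^i$ lie in one $D_\alpha$-fiber, so agree on $D_0\subseteq D_\alpha$) and \textbf{(4)}. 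For each pair $i\neq j$ the functions $f_\alpha^i\neq f_\alpha^j$ are distinct, so I pick a point $x_\alpha^{ij}=x_\alpha^{ji}\in\mathbb{N}^\mathbb{N}$ with $f_\alpha^i(x_\alpha^{ij})\neq f_\alpha^j(x_\alpha^{ij})$, giving \textbf{(2)}. Finally \textbf{(3)} holds for free: for $\beta<\alpha$ the point $x_\beta^{ij}$ belongs to $D_\alpha$, on which all the $f_\alpha^k$ agree.

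The main obstacle is the lemma, specifically ensuring that fibers of size \emph{exactly} $n$ (not merely $\geq 2$) persist in uncountable number after each countable enlargement of $D_0$; everything rests on the splitting argument that converts a shortage of size-$n$ fibers into a witness that $K$ is of degree $n-1$. The remainder is a routine transfinite recursion mirroring the degree-$2$ construction of \cite{Tod}, and the only design choices that matter are absorbing the witnessing points into $D_\alpha$ so that \textbf{(3)} is automatic, and exploiting that size-$n$ fibers are already detected at the level of $D_0$ so that \textbf{(4)} can be met.
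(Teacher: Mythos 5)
Your proof is correct, and its skeleton (transfinite recursion, absorbing the witnessing points $x^{ij}_\beta$ into $D_\alpha$ so that \textbf{(1)} and \textbf{(3)} come for free) coincides with the paper's. Where you genuinely diverge is in securing \textbf{(4)}. You prove a stronger quantitative lemma --- for every countable $D\supseteq D_0$ the projection $\pi_D$ has \emph{uncountably many} fibers of size exactly $n$, via the fiber-splitting argument that a countable collection of size-$n$ fibers could be destroyed by countably many extra coordinates, contradicting that $K$ is not of degree $n-1$ --- and then you satisfy \textbf{(4)} by explicit avoidance: among uncountably many size-$n$ fibers with pairwise distinct $D_0$-restrictions, pick one missing the countably many restrictions already used. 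The paper instead needs only that \emph{one} fiber of $\pi_{D_\alpha}$ of size $\geq n$ exists (immediate from Proposition \ref{characterizationdegree}), and shows that \emph{any} such choice satisfies \textbf{(4)} automatically: if $f^{i_0}_\alpha\upharpoonright_{D_0}=f^{j_0}_\gamma\upharpoonright_{D_0}$ for some $\gamma<\alpha$, then since $\pi_{D_0}$ is at most $n$-to-$1$ the two $n$-element sets $\{f^i_\alpha\}_i$ and $\{f^j_\gamma\}_j$ would coincide; but all $f^k_\alpha$ agree at $x^{01}_\gamma\in D_\alpha$ while $f^0_\gamma(x^{01}_\gamma)\neq f^1_\gamma(x^{01}_\gamma)$, a contradiction. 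So the paper's route is leaner (no uncountability lemma, no selection), while yours buys a reusable structural fact --- every countable enlargement of $D_0$ still leaves uncountably many exact-size-$n$ fibers, each equal to a full $D_0$-fiber --- which makes the bookkeeping transparent and quantifies precisely how $K$ fails to have degree $n-1$. Both arguments rest on the same underlying observation that a size-$n$ fiber of $\pi_D$ must be a whole $D_0$-fiber, and both are complete.
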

\begin{proof}
We proceed by induction. Given any $\alpha<\omega_1$ let us consider $$D_\alpha=D_0\cup\{(x^{01}_\gamma, \ldots, x^{(n-1)(n-2)}_\gamma)\}_{\gamma<\alpha}$$ and suppose that for every $\gamma<\alpha$ the tuples $(f^0_\gamma,\ldots,f^{n-1}_\gamma)$ and $(x^{01}_\gamma, \ldots, x^{(n-1)(n-2)}_\gamma)$ satisfy \textbf{(1)-(4)}. Since $K$ is not of degree $n-1$, by Proposition \ref{characterizationdegree}, $\pi_{D_\alpha}$ is at least $n$-to-$1$. Therefore there exist $f^0_\alpha,\ldots, f^{n-1}_\alpha$ in $K$ satisfying 
$$f^i_\alpha\upharpoonright D_\alpha =f^j_\alpha\upharpoonright D_\alpha,\;\;\; i,j\in\{0,\ldots n-1\}.$$
In particular \textbf{(1)} and \textbf{(3)} are satisfied.

Since all the functions are distinct, there exist $n^2-n$ points $x^{ij}_\alpha$ ($x^{ij}_\alpha=x^{ji}_\alpha$ for every $i\neq j$) distinguishing $f^i_\alpha$ and $f^j_\alpha$, in the sense that \textbf{(2)} is satisfied. It remains to see that \textbf{(4)} is true. By contradiction, let us suppose that there exists $\gamma<\alpha$ and $i_0,j_0\in\{0,\ldots, n-1\}$ such that $f^{i_0}_\alpha\upharpoonright_{D_0}=f^{j_0}_\gamma\upharpoonright_{D_0}$. Then by construction $f^{i}_\alpha\upharpoonright_{D_0}=f^{j}_\gamma\upharpoonright_{D_0}$ for every $i,j$. Since $\pi_{D_0}$ is at most $n$-to-$1$, there are indices $k_0$ and $k_1$ such that $f^{k_0}_\alpha=f^0_\gamma$ and $f^{k_1}_\alpha=f^1_\gamma$. But this is impossible because $f^{k_0}_\alpha(x_\gamma^{01})=f^{k_1}_\alpha(x_\gamma^{01})$ by \textbf{(3)} whereas $f^0_\gamma(x_\gamma^{01})\neq f^1_\gamma(x_\gamma^{01})$ by property \textbf{(2)}.
\end{proof}
Due to property \textbf{(2)}, passing to an uncontable subset of $\omega_1$, we can assume that there exist open intervals $I^{ij} \subset \mathbb{R}$ with rational endpoints such that:
\begin{description}
\item[(5)] For every $\alpha<\omega_1$ and $i,j\in\{0,\ldots, n-1\}$ different indices, we have
\begin{eqnarray*}
\overline{I^{ij}}\cap \overline{I^{ji}}=\emptyset\\
f^i_\alpha(x^{ij}_\alpha)\in I^{ij}\\
f^j_\alpha(x^{ij}_\alpha)\in I^{ji}
\end{eqnarray*}
\end{description}
Also we can suppose without loss of generality that either $I^{ij}=I^{kl}$ or $I^{ij}\cap I^{kl}=\emptyset$ for any choice of indices.
\\[0.1cm]
As in \cite{Tod}, for every $m\in\mathbb{N}$ fix an enumeration $\{B_{mk}\}_{k\in\mathbb{N}}$ of all open rational intervals with diameter $\leq 2^{-m}$ and $\{d_m\}_{m\in\mathbb{N}}$ an enumeration of $D_0$ in which every element is repeated infinitely many times. Given a finite sequence $t\in\mathbb{N}^{<\mathbb{N}}$ define 
$$B(t)=\{h\in\mathbb{R}^{D_0}:\, h(d_m)\in B_{m\,t(m)},\;\;m\leq\,length(t)\}.$$
Note that $\{B(t)\}_{t\in\mathbb{N}^{<\mathbb{N}}}$ is an open basis of $\mathbb{R}^{D_0}$. Note also that $B(s)\subset B(t)$ if $t\preceq s$.

A key point in the proof is to find an embedding between the Cantor set $2^\mathbb{N}$ and some perfect subspaces of $(\mathbb{N}^\mathbb{N})^{(n^2-n)}$ and $\mathbb{R}^{D_0}$ respectively.  

Given an ordinal $\alpha<\omega_1$ we will denote by $h_\alpha$ the common restriction to $D_0$ of the $f^i_\alpha$ functions. We will construct a tree $T$ on $(\mathbb{N}^{<\mathbb{N}})^{n^2-n}\times \mathbb{N}^{<\mathbb{N}}$ formed by pairs $(t^0,t^1)$ where $t^0$ will be a tuple that we write in the form $(t^{0}_{ij}, i,j=0,\ldots,n-1, i\neq j)$, and will be a restriction of some (in fact many) $(x^{ij}_\alpha : i,j=0,\ldots,n-1, i\neq j)$, while the second coordinate $t^1$ codifies the open sets $B(t^1)$ where the function $h_\alpha$ lies.

\begin{defi}
Given $(t^0, t^1)$ in $(\mathbb{N}^{<\mathbb{N}})^{(n^2-n)}\times \mathbb{N}^{<\mathbb{N}}$ and given $\alpha<\omega_1$, we say that the pair $(x_\alpha, h_\alpha)$ in $(\mathbb{N}^\mathbb{N})^{(n^2-n)}\times \mathbb{R}^{D_0}$ extends $(t^0, t^1)$ if $h_\alpha\in B(t^1)$ and $t^0_{ij}\subset x^{ij}_\alpha$ for all $i,j=0,\ldots,n-1$, $i\neq j$.
\end{defi}

All the components of the pairs $(t^0,t^1)$ of our tree will have same length, so from now on, we assume that for all tuples $(t^0,t^1)\in (\mathbb{N}^{<\mathbb{N}})^{(n^2-n)}\times \mathbb{N}^{<\mathbb{N}}$, we have that $length(t^0_{ij}) = length(t^1)$ for all $i,j$. In the sequel, the expression $\exists^{\omega_1} \alpha$ means \emph{there exist $\omega_1$ many $\alpha$'s such that...}.

\begin{defi}
Given a node $(t^0,t^1)\in (\mathbb{N}^{<\mathbb{N}})^{(n^2-n)}\times \mathbb{N}^{<\mathbb{N}}$ we say that two extensions $(s^0,s^1)$ and $(u^0,u^1)$ of $(t^0,t^1)$ properly split if 
$$s^0\neq u^0\;\;\wedge\;\; \exists m\in\mathbb{N}\,\left(\,s^1(m)\neq t^1(m)\;\wedge\; \overline{B_{ms^1(m)}}\cap\overline{B_{mt^1(m)}}=\emptyset\,\right).$$
\end{defi}

\begin{prop}
There exists a tree $T$ on the product $(\mathbb{N}^{<\mathbb{N}})^{(n^2-n)}\times \mathbb{N}^{<\mathbb{N}}$ formed by pairs $(t^0,t^1)$ for which the following condition holds
$$\exists^{\omega_1}\,\alpha\;\;\;\text{such that}\;\;\; (x_\alpha,h_\alpha)\;\;\text{extends}\;\; (t^0,t^1).$$
Moreover, for every $(t^0,t^1)$ in $T$ there exist extensions $(s^0,s^1)$ and $(u^0,u^1)$ of $(t^0,t^1)$ in $T$ that properly split.
\end{prop}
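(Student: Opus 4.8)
The plan is to let $T$ consist of all nodes $(t^0,t^1)$, with all components of equal length, such that $\exists^{\omega_1}\alpha$ with $(x_\alpha,h_\alpha)$ extending $(t^0,t^1)$, and then to verify both that this is a tree and that every node splits properly. That $T$ is a tree is almost immediate: the root, whose components are all empty, is extended by every $\alpha<\omega_1$ because $B(\emptyset)=\mathbb{R}^{D_0}$; and if $(r^0,r^1)\preceq(t^0,t^1)$, then any $\alpha$ extending $(t^0,t^1)$ also extends $(r^0,r^1)$, since $h_\alpha\in B(t^1)\subseteq B(r^1)$ and $r^0_{ij}\preceq t^0_{ij}\subset x^{ij}_\alpha$. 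Hence the uncountable witnessing set only grows when passing to initial segments, and $T$ is closed downwards.

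The substance is the splitting property. Fix $(t^0,t^1)\in T$ and set $S=\{\alpha:(x_\alpha,h_\alpha)\text{ extends }(t^0,t^1)\}$, which is uncountable. Two facts drive the argument. First, by property \textbf{(4)} the restrictions $h_\alpha$ are pairwise distinct, so $\{h_\alpha:\alpha\in S\}$ is an uncountable subset of the Polish, hence second countable, space $B(t^1)\subseteq\mathbb{R}^{D_0}$. Second, combining \textbf{(2)} and \textbf{(3)} shows that $\alpha\mapsto x^{01}_\alpha$ is injective: if $\beta<\alpha$ and $x^{01}_\alpha=x^{01}_\beta$, then \textbf{(3)} gives $f^0_\alpha(x^{01}_\beta)=f^1_\alpha(x^{01}_\beta)$, contradicting \textbf{(2)}. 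Thus $\{x^{01}_\alpha:\alpha\in S\}$ is likewise an uncountable subset of $\mathbb{N}^\mathbb{N}$.

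For separation in the $h$-coordinate I would use condensation points. Since $\{h_\alpha:\alpha\in S\}$ is uncountable in a Polish space, all but countably many of its points are condensation points and the set of condensation points is a nonempty perfect set, so I may pick two distinct condensation points $p\neq q$. Choose $d=d_m\in D_0$ with $p(d)\neq q(d)$, and disjoint-closure rational intervals $J_0\ni p(d)$, $J_1\ni q(d)$ realised as basic intervals $B_{mk_0},B_{mk_1}$ at a coordinate $m\geq length(t^1)$, which is possible since every element of $D_0$ is repeated infinitely often in the enumeration $\{d_m\}$. Because $p,q$ are condensation points, both sets $U_\varepsilon=\{\alpha\in S:h_\alpha(d)\in J_\varepsilon\}$ are uncountable. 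I would then fill in the coordinates of $t^1$ between its length and $m$ by successive pigeonholing on the values $h_\alpha(d_k)$, each step shrinking to an uncountable subset, obtaining extensions $s^1\supseteq t^1$ with $s^1(m)=k_0$ and $u^1\supseteq t^1$ with $u^1(m)=k_1$, carried by uncountable sets $V_0\subseteq U_0$ and $V_1\subseteq U_1$. By construction the intervals $B_{ms^1(m)}$ and $B_{mu^1(m)}$ of the two children have disjoint closures, which is exactly the separation in the second coordinate demanded by proper splitting.

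It remains to coordinate this with $s^0\neq u^0$ while keeping both children in $T$. Here I would first extend the $t^0$-part inside $V_1$, by finitely many pigeonholes, to a full tuple $u^0$ carried by an uncountable subset of $V_1$; this fixes the finite sequence $u^0_{01}$. Then, using that $\{x^{01}_\alpha:\alpha\in V_0\}$ is uncountable and so cannot be concentrated on a single branch of $\mathbb{N}^{<\mathbb{N}}$, I would choose an extension $s^0_{01}$ of $t^0_{01}$ with $s^0_{01}\neq u^0_{01}$ carried by uncountably many $\alpha\in V_0$, and complete $s^0$ by further pigeonholing to the common length. The resulting nodes $(s^0,s^1)$ and $(u^0,u^1)$ lie in $T$, witnessed by uncountable subsets of $V_0$ and $V_1$, and satisfy $s^0\neq u^0$ together with the disjoint-closure condition, hence they properly split. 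The main obstacle I anticipate is precisely this coordination: arranging the $x$-splitting and the $h$-splitting to be witnessed by the \emph{same} two uncountable families, so that a single pair of children simultaneously realises $s^0\neq u^0$ and the ball-separation while each remains extended by uncountably many $\alpha$; processing the refinements in order, first the $h$-coordinate and then the $x$-coordinate, always shrinking to uncountable subsets, is what makes it go through. The Ramsey-type Theorems~\ref{Ramsey} and~\ref{Ramsey2} are not needed at this stage and enter only later, when the branches of $T$ are used to produce the actual copy of $S_n(I)$ or $D_n(2^\mathbb{N})$.
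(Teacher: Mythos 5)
Your proposal is correct in substance, but it takes a genuinely different route from the paper. You take $T$ to be (the equal-length part of) the full set $\Upsilon$ of nodes extended by uncountably many $(x_\alpha,h_\alpha)$, and you manufacture the splitting by hand, via condensation points in the Polish space $\mathbb{R}^{D_0}$ and repeated uncountable-versus-countable pigeonhole. The paper instead purifies on the ordinal side: it sets $\mathscr{A}=\{\alpha<\omega_1 : \text{every } (t^0,t^1) \text{ extended by } (x_\alpha,h_\alpha) \text{ lies in } \Upsilon\}$, notes that $\omega_1\setminus\mathscr{A}\subseteq\bigcup_{(t^0,t^1)\notin\Upsilon}\mathscr{E}_{(t^0,t^1)}$ is countable (countably many nodes, each bad one countably witnessed), and defines $T$ as the set of nodes extended by some $\alpha\in\mathscr{A}$. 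The payoff is that membership in $T$ is certified by a \emph{single} good ordinal, so splitting a node costs nothing: pick distinct $\alpha,\beta\in\mathscr{A}$ extending it, use $x_\alpha\neq x_\beta$ (your \textbf{(2)}+\textbf{(3)} argument, which the paper leaves implicit) and $h_\alpha\neq h_\beta$ (by \textbf{(4)}), choose $d=d_k$ with $k$ so large that $2^{-k}$ is small against $|h_\alpha(d)-h_\beta(d)|$, and restrict both pairs at level $k+1$. The two children are restrictions of single coherent witnesses, so the $x$-splitting and the ball-separation are coordinated automatically, the children have equal length (hence incomparable $x$-parts), and no pigeonhole or condensation-point machinery is needed. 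Your approach buys elementarity at the price of exactly the coordination problem you flag; the paper's $\mathscr{A}$-trick dissolves that problem rather than solving it.

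One step of yours is stated too quickly: the choice of $s^0_{01}$. That $\{x^{01}_\alpha:\alpha\in V_0\}$ is uncountable, hence not a single branch, does not by itself give, at the common length, a restriction different from $u^0_{01}$ carried by uncountably many $\alpha\in V_0$ --- it is consistent that at that length every uncountably-carried restriction coincides with $u^0_{01}$. The routine repair: let $T_0$ be the tree of finite sequences $r$ with $r\subset x^{01}_\alpha$ for uncountably many $\alpha\in V_0$; all but countably many $\alpha\in V_0$ have all their restrictions in $T_0$, so if $T_0$ were a chain, uncountably many $\alpha$ would share one $x^{01}_\alpha$, contradicting the injectivity you proved. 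Hence $T_0$ contains two incomparable nodes $r_1\neq r_2$ of some common length $L^*$; extend the $u$-side by pigeonhole to length $L^*$, note it can agree with at most one of $r_1,r_2$, and take the other as $s^0_{01}$. This gives equal-length, hence incomparable, $x$-parts --- which matters, because the subsequent Cantor-scheme proof that $a\mapsto(x^{01}_a,\ldots,x^{(n-1)(n-2)}_a)$ is injective needs sibling incomparability, not mere inequality of $s^0$ and $u^0$. Finally, you silently corrected the paper's misprinted splitting condition (reading $\overline{B_{ms^1(m)}}\cap\overline{B_{mu^1(m)}}=\emptyset$ in place of the literal comparison with $t^1$), which is indeed the reading the paper itself uses later; and your closing remark is accurate --- Theorems \ref{Ramsey} and \ref{Ramsey2} play no role at this stage.
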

\begin{proof}
First of all define
$$\Upsilon=\{(t^0,t^1)\in(\mathbb{N}^{<\mathbb{N}})^{(n^2-n)}\times \mathbb{N}^{<\mathbb{N}}:\exists^{\omega_1}\alpha\;\text{such that}\; (x_\alpha,h_\alpha)\;\text{extends}\; (t^0,t^1)\}$$
We will prove that $\Upsilon$ has a subtree $T$ which satisfies the statement of the proposition. Let us define the following subset of $\omega_1$
\begin{equation}\label{A}
\mathscr{A}=\{\alpha<\omega_1:\, \text{if}\;(x_\alpha,h_\alpha)\;\text{extends}\; (t^0,t^1) \;\text{then}\; (t^0,t^1)\in\Upsilon\}.
\end{equation}
Given any $(t^0,t^1)$ denote by $\mathscr{E}_{(t^0,t^1)}$ the set of all ordinals $\alpha$ such that $(x_\alpha, h_\alpha)$ extends $(t^0,t^1)$. Notice that $\mathscr{A}$ is uncountable. In fact, $\omega_1\setminus\mathscr{A}$ is countable because
$$\omega_1\setminus\mathscr{A}\subseteq\bigcup_{(t^0,t^1)\notin\Upsilon} \mathscr{E}_{(t^0,t^1)}$$
and $\mathscr{E}_{(t^0,t^1)}$ is countable whenever $(t^0,t^1)\not\in \Upsilon$. With this in mind define
$$T=\{(t^0,t^1)\in (\mathbb{N}^{<\mathbb{N}})^{(n^2-n)}\times \mathbb{N}^{<\mathbb{N}} : \exists\,\alpha\in \mathscr{A}\;\text{such that}\; (x_\alpha, h_\alpha)\; \text{extends}\; (t^0,t^1)\}.$$
Since we said that $\omega_1\setminus \mathscr{A}$ is countable and $T\subset \Upsilon$, it is clear that given any $(t^0,t^1)\in T$ the set of all ordinals in $\alpha\in \mathscr{A}$ such that $(x_\alpha,h_\alpha)$ extends $(t^0,t^1)$ is uncountable. Let us see that $T$ satisfies the statement of the proposition. Given a node $(t^0,t^1)\in T$ let us consider two different ordinals $\alpha,\beta\in\mathscr{A}$ such that $(x_\alpha, h_\alpha)$ and $(x_\beta, h_\beta)$ extend $(t^0,t^1)$. Since $x_\alpha$ and $x_\beta$ are different, we can pick some natural number $m$ such that $x_\alpha\upharpoonright m$ and $x_\beta\upharpoonright m$ are different. For the second coordinate, we can find some $d\in D_0$ and $l\geq m$ such that
\begin{equation}\label{desi}
|h_\alpha(d)-h_\beta(d)|>\frac{1}{2^l}
\end{equation}
Since in $D_0$ every element is repeated infinitely many times, we can also pick an index $k>l$ such that $d_k=d$. We can find sequences $t',\bar{t}\in \mathbb{N}^{k+1}$ such that $(x_\alpha\upharpoonright k+1, t'), (x_\beta\upharpoonright k+1, \overline{t})$ extend $(t^0,t^1)$ and lie in $T$. Moreover $h_\alpha(d_k)\in B_{kt'(k)}$ and $h_\beta(d_k)\in B_{k\overline{t}(k)}$. Due to the inequality (\ref{desi}) above we conclude that $B_{kt'(k)}$ and $B_{k\overline{t}(k)}$ have disjoint closures and then $(x_\alpha\upharpoonright k, t'), (x_\beta\upharpoonright k, \overline{t})$ are the extensions of $(t^0,t^1)$ that we were looking for.
\end{proof}

In the same way as in \cite{Tod} we build, by induction on $\sigma\in 2^{<\mathbb{N}}$, nodes $t_\sigma = (t^0_\sigma, t^1_\sigma)\in T$ and $n-$tuples $(g^0_\sigma,\ldots, g^{n-1}_\sigma)\in K^n_0$ which are respective approximations to $(x_\alpha,h_\alpha)$ and $(f^0_\alpha,\ldots, f^{n-1}_\alpha)$ and satisfy the following conditions:
\begin{description}
\item[(6)] If $\tau\in 2^{<\mathbb{N}}$ extends $\sigma$ then $(t^0_\tau, t^1_\tau)$ is an extension of $(t^0_\sigma, t^1_\sigma)$; that is, $t^0_{(ij)\sigma}\preceq t^0_{(ij)\tau}$ for every $i,j\in\{0,\ldots, n-1\}$, $i\neq j$ and $t^1_\sigma\preceq t^1_\tau$.
\item[(7)]Given $\sigma\in 2^{<\mathbb{N}}$ the sequences $(t^0_{\sigma 0}, t^1_{\sigma 0})$ and $(t^0_{\sigma 1}, t^1_{\sigma 1})$ are extensions of $(t^0_{\sigma}, t^1_{\sigma})$ that properly split.
\item[(8)]For every sequence $\sigma\in 2^{<\mathbb{N}}$ and indices $i,j\in\{0,\ldots,n-1\}$ with $i\neq j$ we have
\begin{eqnarray*}
g^i_\sigma[t^0_{(ij)\sigma}]\subset I^{ij}\\
g^j_\sigma[t^0_{(ij)\sigma}]\subset I^{ji}
\end{eqnarray*}
where remember that, for $r\in\mathbb{N}^{<\mathbb{N}}$, $[r] = \{x\in \mathbb{N}^{\mathbb{N}} : r\subset x\}$. 
\item[(9)]$g^i_\sigma\upharpoonright D_0\in B(t^1_\sigma)$ for every $\sigma\in 2^{<\mathbb{N}}$ and $i\in\{0,\ldots, n-1\}$.
\end{description}
\begin{description}
\item[(10)]If $\tau\neq \sigma$ and $\tau,\sigma\in 2^m$ then for every $i,j,k,l\in\{0,\ldots,n-1\}$ with $i\neq j$
$$\sup_{x\in [t^0_{(ij)\sigma}]} |g^k_\tau(x)-g^l_\tau(x)|\leq \frac{1}{2^m}$$.
\end{description}

This is done by induction, let us explain how. For the purpose of this construction, if $\sigma,\tau\in 2^{<\mathbb{N}}$, $\sigma<\tau$ means that both sequences have the same length and $\sigma$ is below $\tau$ lexicographically. Along the induction, we also build auxiliary tuples $v(\tau,\xi)  \in T$ for $\tau,\xi\in 2^{<\mathbb{N}}$ of the same length $m$  such that 
\begin{itemize}
\item $t_{\tau|m-1} \preceq v(\tau,\xi) \preceq v(\tau,\xi')\preceq t_{\tau}$ whenever $\xi<\xi'$, 
\item $v(\tau 0,\xi) = v(\tau 1,\xi)$ if $\xi < \tau 0$. 
\item whenever $i\neq j$, and $\tau<\tau'$ we have
$$\sup_{x\in [v^0_{(ij)}(\tau',\xi)]} |g^k_\tau(x)-g^l_\tau(x)|\leq \frac{1}{2^{m+1}}$$.
\end{itemize}
These $v(\tau,\xi)$ are provisional values of $t_\tau$, so that the definite value is $t_\tau = v(\tau,(111\ldots 1))$.

We fix $\sigma\in 2^{<\mathbb{N}}$ of length $m$ and we suppose that all $g_\tau^i$, $t_\tau$ have been defined when $length(\tau)\leq m$ and when  $\tau<\sigma0$, and the $v(\tau,\xi)$ have been defined for all $\tau \in 2^{m+1}$ and $\xi<\sigma 0$. We shall show how to define all those objects for $\tau = \sigma 0, \sigma 1$ and for any $\xi$ of length $m+1$. Let $\sigma^{-}$ be the immediate lexicographical predecessor of $\sigma$. For notational simplicity it is convenient to consider an imaginary $0^- = (0\ldots 0)^-\in \mathbb{N}^{m+1}$ such that $v(\tau,0^-) = t_{\tau|m}$. The first thing is to find two nodes $u^0,u^1\in T$ above $v(\sigma 0, \sigma^-) = v(\sigma 1,\sigma^-)$ that properly split. Let us deal first with defining the objects associated to $\sigma 0$. For every $\tau\in 2^{m+1}$ choose $\alpha(\tau) \in \mathcal{A}$ shuch that $(x_{\alpha(\tau)},h_{\alpha(\tau)})$ extends $v(\tau,\sigma^-)$ for all $\tau$, and moreover, $(x_{\alpha(\sigma \varepsilon)},h_{\alpha(\sigma \varepsilon)})$ extends $u^\varepsilon$. We can suppose that $\alpha(\sigma 0) > \alpha(\tau)$ for all other $\tau$. Then, we can apply \textbf{(3)} and \textbf{(5)} for $\alpha=\alpha(\sigma 0)$ and $\beta = \alpha(\tau)$. By the density of $K_0$, we can find $g^i_{\sigma 0} \in K_0$ for $i=0,\ldots,n-1$ such that
$$g_{\sigma 0}^k(x_{\alpha(\tau)}) - g_{\sigma 0}^l (x_{\alpha(\tau)}) < 2^{-m-2}, \tau\neq \sigma 0$$
$$g^i_{\sigma 0} (x_{\alpha(\sigma 0)} ^{ij}) \in I^{ij}.$$
Using the continuity of $g^i_{\sigma 0}$ at the points $x_{\alpha(\tau)}^{ij}$, we can find the nodes $v(\tau,\sigma 0)$ large enough to satisfying all requirements. The case of $\sigma 1$ is done exactly in the same way. This finishes the construction.

Given $a\in 2^\mathbb{N}$ and different $i,j<n$ let us consider $x^{ij}_a\in\mathbb{N}^\mathbb{N}$ the unique extension of the sequence $\{t^0_{(ij)a\upharpoonright m}\}_m$, and also let $h_a\in \mathbb{R}^{D_0}$ be the unique function such that $h_a(d_m) \in \overline{B_{m t^1_{a\upharpoonright k}(m)}}$ for all $m$ and sufficiently large $k$. In order to make sure that such a function exists one should check that we do not get incompatible conditions when $d_m = d_{m'}$, but this is clear since every $(t^0,t^1)$ is extended by some $(x_\alpha,h_\alpha)$.

\begin{prop}
The map $a\mapsto (x^{01}_a,\ldots, x^{(n-1)(n-2)}_a)$ is a homeomorphism between $2^\mathbb{N}$ and the perfect subspace $\{(x^{01}_a,\ldots, x^{(n-1)(n-2)}_a)\in{(\mathbb{N}^\mathbb{N})}^{n^2-n}:a\in 2^\mathbb{N}\}$ of ${(\mathbb{N}^\mathbb{N})}^{n^2-n}$.
Similarly, the map $a\mapsto h_a$ is a homeomorphism between $2^\mathbb{N}$ and the perfect subspace $\{h_a:\,a\in 2^\mathbb{N}\}$ of $\mathbb{R}^{D_0}$.
\end{prop}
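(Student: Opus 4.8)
The plan is to exploit the fact that both maps have compact domain $2^\mathbb{N}$ and Hausdorff codomain. For such a map, once we know it is continuous and injective it is automatically a homeomorphism onto its image; and since that image is then a continuous injective image of the perfect compact space $2^\mathbb{N}$, it is itself homeomorphic to $2^\mathbb{N}$ and hence a perfect subspace. So the whole proof reduces to checking \emph{continuity} and \emph{injectivity} of each of the two maps, and the statements about the images being perfect come for free.

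For continuity, the key is the coherence property \textbf{(6)}. Consider first $a\mapsto x^{ij}_a$. By \textbf{(6)} the finite sequences $t^0_{(ij)a\upharpoonright m}$ grow coherently along the branch $a$, and $x^{ij}_a$ is by definition their common extension; moreover the proper splitting in \textbf{(7)} forces the lengths $length(t^0_{(ij)a\upharpoonright m})$ to tend to infinity. Hence if $a\upharpoonright m = a'\upharpoonright m$ then $x^{ij}_a$ and $x^{ij}_{a'}$ agree on an initial segment whose length grows with $m$, which is exactly continuity into $\mathbb{N}^\mathbb{N}$ (and so into the product). For $a\mapsto h_a$ I would argue coordinatewise: fixing $d\in D_0$ and $\varepsilon>0$, pick $m$ with $d_m=d$ and $2^{-m}<\varepsilon$; if $a$ and $a'$ agree far enough then $t^1_{a\upharpoonright k}(m) = t^1_{a'\upharpoonright k}(m)$ for suitable $k$, so $h_a(d)$ and $h_{a'}(d)$ both lie in the single interval $\overline{B_{m\,t^1(m)}}$ of diameter $\le 2^{-m}<\varepsilon$. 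As basic open sets of $\mathbb{R}^{D_0}$ involve only finitely many coordinates, this yields continuity.

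For injectivity I would use the two halves of the definition of proper splitting together with \textbf{(7)}. Given $a\neq a'$, let $\sigma$ be their longest common initial segment, so that (up to swapping) $a$ extends $\sigma 0$ and $a'$ extends $\sigma 1$; by \textbf{(7)} the nodes $(t^0_{\sigma 0},t^1_{\sigma 0})$ and $(t^0_{\sigma 1},t^1_{\sigma 1})$ properly split. The clause $s^0\neq u^0$ says $t^0_{\sigma 0}\neq t^0_{\sigma 1}$, so some coordinate satisfies $t^0_{(ij)\sigma 0}\neq t^0_{(ij)\sigma 1}$; since both have the same length they are incomparable, whence their extensions $x^{ij}_a$ and $x^{ij}_{a'}$ differ and the first map is injective. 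For the second map the clause providing an $m$ with $t^1_{\sigma 0}(m)\neq t^1_{\sigma 1}(m)$ and $\overline{B_{m t^1_{\sigma 0}(m)}}\cap \overline{B_{m t^1_{\sigma 1}(m)}}=\emptyset$ forces $h_a(d_m)$ and $h_{a'}(d_m)$ into disjoint closed intervals, so $h_a\neq h_{a'}$.

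The main obstacle, and the point I would treat most carefully, is the interaction with the enumeration $\{d_m\}$ in which every element of $D_0$ is repeated infinitely often. This is what makes $h_a$ well-defined at all: for a fixed $d\in D_0$ the various constraints $h_a(d)\in\overline{B_{m\,t^1(m)}}$ coming from the indices $m$ with $d_m=d$ form a family of closed intervals of diameter $\le 2^{-m}$ whose finite intersections are nonempty (because each node is extended by some genuine $(x_\alpha,h_\alpha)$), so their intersection is a single point. I would record this consistency check first, note that it simultaneously guarantees the shrinking-diameter behaviour used in the continuity argument, and verify that the proper-splitting step of the construction really does increase node lengths, so that the continuity argument has lengths tending to infinity to work with.
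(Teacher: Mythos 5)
Your proposal is correct and takes essentially the same approach as the paper: for $a\mapsto h_a$ the paper uses exactly your coordinatewise $2^{-m}$ continuity bound plus injectivity via the proper-splitting clause and compactness, and for $a\mapsto (x^{01}_a,\ldots,x^{(n-1)(n-2)}_a)$ the paper merely packages your continuity/injectivity/compactness argument as a Cantor scheme of clopen sets with vanishing diameter. The well-definedness check for $h_a$ that you rightly flag is also made in the paper, just before the proposition rather than inside its proof.
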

\begin{proof}
It is easy to see that the family $\{\mathcal{A}_{a\upharpoonright m}\}_{m\in\mathbb{N}, a\in 2^\mathbb{N}}$ where
$$\mathcal{A}_{a\upharpoonright m}=\{y\in{(\mathbb{N}^\mathbb{N})}^{n^2-n}:\;t^0_{a\upharpoonright m}\subset^* y\}$$
is a Cantor scheme. Moreover the sets $\mathcal{A}_{a\upharpoonright m}$ are clopen and for every $a\in 2^\mathbb{N}$ we have $\lim_m diam(\mathcal{A}_{a\upharpoonright m})=0$. Therefore the map $a\mapsto  (x^{01}_a,\ldots, x^{(n-1)(n-2)}_a)$ is a homeomorphism.

On the other hand, it is clear that the map $a\mapsto h_a$ is continuous because if $a\upharpoonright m+1 = b\upharpoonright m+1$ then $|h_a(d_m)-h_b(d_m)|\leq 2^{-m}$, and each $d\in D_0$ is repeated infinitely many times in the sequence $\{d_m\}$. So if we managed to prove that it is injective we would conclude that it is in fact a homeomorphism. So take $a$ and $b$ two different sequences of the Cantor space and consider $m_0$ the minimum coordinate where $a(m_0)\neq b(m_0)$. By virtue of \textbf{(7)} $(t^0_{a\upharpoonright m_0+1},t^1_{a\upharpoonright m_0+1})$ and $(t^0_{b\upharpoonright m_0+1},t^1_{b\upharpoonright m_0+1})$ properly split, so there exists $m$ such that 
$$\overline{B_{m t^1_{a\upharpoonright m_0+1}(m)}} \cap \overline{B_{m t^1_{b\upharpoonright m_0+1}(m)}} = \emptyset $$
So, in particular, $h_a(d_m) \neq h_b(d_m)$.
\end{proof}
Let us note that the family of functions $\{g^i_\sigma\}_{\sigma\in 2^{<\mathbb{N}}}$ with $i\in\{0,\ldots, n-1\}$ is a relatively compact subset of $\boldsymbol{\mathcal{B}_1}(\mathbb{N}^\mathbb{N})$. By the Ramsey-like Theorem \ref{Ramsey} we can ensure a uniform behaviour of the functions $g^i_{a\upharpoonright m}$ for many $a\in 2^\mathbb{N}$ . Namely, there exists a perfect Polish space $P\subset 2^\mathbb{N}$ and a infinite strictly increasing sequence $\{m_k\}_k$ of natural numbers for which there exists the limit $\lim_m g^i_{a\upharpoonright m} =g^i_a$ for every $a\in P$ and $i\in\{0,\ldots, n-1\}$.
\begin{prop}
The family of functions $\{(g^0_a,\ldots, g^{n-1}_a)\}_{a\in P}\subset K^n$ satisfies the following properties:
\begin{description}
\item[(11)] For every different indices $i,j\in\{0,\ldots,n-1\}$ and $a\in P$ we have
\begin{eqnarray*}
g^i_a(x^{ij}_a)\in\overline{I^{ij}}\\
g^j_a(x^{ij}_a)\in\overline{I^{ji}}
\end{eqnarray*}
\item[(12)] $g^i_a\upharpoonright D_0 = h_a$ for every $i\in\{0,\ldots, n-1\}$ and $a\in P$. In fact, $\pi^{-1}_{D_0}(h_a)=\{g^0_a,\ldots, g^{n-1}_a\}$.

\item[(13)] For every indices $i,j,k,l\in\{0,\ldots, n-1\}$ with $i\neq j$ and distinct points $a,b\in P$ we have 
$$g^k_a(x^{ij}_b)=g^l_a(x^{ij}_b)$$ 
\end{description}
\end{prop}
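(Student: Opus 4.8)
The plan is to derive all three properties at once by passing to the pointwise limit along the Ramsey sequence $\{m_k\}_k$ in the finitary conditions \textbf{(8)}, \textbf{(9)} and \textbf{(10)}, and then reading off each statement separately. The only inputs I need are that $g^i_a=\lim_k g^i_{a\upharpoonright m_k}$ holds pointwise for $a\in P$, that $t^0_{(ij)a\upharpoonright m}\subset x^{ij}_a$ and $t^1_{a\upharpoonright m}\preceq t^1_{a\upharpoonright m'}$ for $m\le m'$ (by \textbf{(6)} and the definitions of $x^{ij}_a$ and $h_a$), together with the separating data \textbf{(5)} and the fact that $\pi_{D_0}$ is at most $n$-to-$1$.

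For \textbf{(11)} I would fix $a\in P$ and $i\neq j$. Since $t^0_{(ij)a\upharpoonright m}\subset x^{ij}_a$, the point $x^{ij}_a$ belongs to $[t^0_{(ij)a\upharpoonright m}]$, so \textbf{(8)} applied with $\sigma=a\upharpoonright m$ gives $g^i_{a\upharpoonright m}(x^{ij}_a)\in I^{ij}$ and $g^j_{a\upharpoonright m}(x^{ij}_a)\in I^{ji}$ for every $m$. Evaluating along the Ramsey sequence and letting the level tend to infinity sends these values into the closed intervals $\overline{I^{ij}}$ and $\overline{I^{ji}}$, which is exactly \textbf{(11)}.

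For \textbf{(13)} I would take distinct $a,b\in P$, fix $i\neq j$ and arbitrary $k,l$, and note that $a\upharpoonright m\neq b\upharpoonright m$ for all large $m$. Then \textbf{(10)} with $\tau=a\upharpoonright m$ and $\sigma=b\upharpoonright m$, evaluated at the point $x^{ij}_b\in[t^0_{(ij)b\upharpoonright m}]$, yields $|g^k_{a\upharpoonright m}(x^{ij}_b)-g^l_{a\upharpoonright m}(x^{ij}_b)|\le 2^{-m}$. Letting $m$ run through the Ramsey sequence and tend to infinity makes the bound vanish while both sides converge to $g^k_a(x^{ij}_b)$ and $g^l_a(x^{ij}_b)$, forcing them to agree.

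The delicate part, and the step I expect to be the main obstacle, is \textbf{(12)}. To obtain $g^i_a\upharpoonright D_0=h_a$ I would use \textbf{(9)}: for each enumeration index $m$ the coordinate $t^1_{a\upharpoonright m_k}(m)$ stabilizes in $k$ to some value $c_m$, so both $g^i_a(d_m)$ and $h_a(d_m)$ land in the common closed basic interval $\overline{B_{m c_m}}$ of diameter at most $2^{-m}$, whence $|g^i_a(d_m)-h_a(d_m)|\le 2^{-m}$; since every $d\in D_0$ occurs as $d_m$ for arbitrarily large $m$, this forces $g^i_a(d)=h_a(d)$. Thus $g^0_a,\ldots,g^{n-1}_a$ all lie in $\pi^{-1}_{D_0}(h_a)$, a set of cardinality at most $n$. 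The final and genuinely necessary ingredient is that these $n$ functions are pairwise distinct: by \textbf{(11)}, $g^i_a(x^{ij}_a)\in\overline{I^{ij}}$ and $g^j_a(x^{ij}_a)\in\overline{I^{ji}}$, and these closures are disjoint by \textbf{(5)}, so $g^i_a\neq g^j_a$ whenever $i\neq j$. Hence $\{g^0_a,\ldots,g^{n-1}_a\}$ consists of exactly $n$ distinct elements inside a set of size $\le n$, giving $\pi^{-1}_{D_0}(h_a)=\{g^0_a,\ldots,g^{n-1}_a\}$ and completing \textbf{(12)}.
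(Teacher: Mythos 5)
Your proof is correct and takes essentially the same route as the paper: \textbf{(11)} and \textbf{(13)} by passing to pointwise limits along the Ramsey subsequence in \textbf{(8)} and \textbf{(10)}, and the first half of \textbf{(12)} from \textbf{(9)} together with the definition of $h_a$ (you phrase it as a direct diameter estimate where the paper argues by contradiction, but it is the same use of the stabilizing coordinates $t^1_{a\upharpoonright k}(m)$ and the repetition of each $d\in D_0$ in the enumeration). Your closing step for $\pi^{-1}_{D_0}(h_a)=\{g^0_a,\ldots,g^{n-1}_a\}$ --- pairwise distinctness from \textbf{(11)} and \textbf{(5)} plus the at most $n$-to-$1$ property of $\pi_{D_0}$ --- is exactly the paper's argument.
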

\begin{proof}
The properties \textbf{(11)} and \textbf{(13)} are straightforward consequences of \textbf{(8)} and \textbf{(10)}, so we only need to prove property \textbf{(12)}. If \textbf{(12)} does not hold, by the definition of $h_a$, we could find $m_1<m_0$  such that
$$g^i_a(d_{m_1})\notin \overline{B_{m_1\,t^1_{a\upharpoonright m_0}(m_1)}}.$$
 Due to the convergence of $\{g^i_{a\upharpoonright m}\}_m$, there exists a natural number $m>m_0$ such that
$$g^i_{a\upharpoonright m}(d_{m_1})\not\in \overline{B_{m_1\,t^1_{a\upharpoonright m_0}(m_1)}}.$$
But on the other hand, by \textbf{(9)}, we have $g^i_{a\upharpoonright m}\in B(t^1_{a\upharpoonright m})\subseteq B(t^1_{a\upharpoonright m_0})$, which leads to a contradiction. The last statement of \textbf{(12)} follows from the facts that $g^i_a$ and $g^j_a$ are distinct for $i\neq j$ (see \textbf{(11)} and \textbf{(5)}) and $\pi_{D_0}$ is at most $n$-to-$1$.
\end{proof}
 Now as in \cite{Tod}, choose open intervals $J^{ij}$ with rational endpoints such that:
\begin{description}
\item[(14)] $\overline{I^{ij}}\subseteq J^{ij}$ for every different indices $i,j\in\{0,\ldots, n-1\}$.
\item[(15)] $J^{ij}\cap J^{ji}=\emptyset$ for every different indices $i,j\in\{0,\ldots, n-1\}$.
\end{description}
and define the following relations on $P^2$, where $\min$ and $\max$ refer to the natural lexicographical order of $2^\mathbb{N}$:
\begin{description}
\item[(16)] $(x,y)\in A^{ij}_0$ if and only if $x\neq y$ and $g^i_a(x^{ij}_b)\in J^{ij}$ where $a=\min\{x,y\}$ and \linebreak$b=\max\{x,y\}$.
\item[(17)] $(x,y)\in A^{ij}_1$ if and only if $x\neq y$ and $g^i_a(x^{ij}_b)\in J^{ij}$ where $a=\max\{x,y\}$ and \linebreak$b=\min\{x,y\}$.
\end{description}
It is easy to see that the previous relations are symmetric and Borel so Theorem \ref{Ramsey2} applies. Iterating this procedure, we obtain a perfect subset $R\subset P$ such that given two different indices $i,j\in\{0,\ldots, n-1\}$ and $\varepsilon\in\{0,1\}$ the following holds:
\begin{description}
\item[(18)] Either $R^{[2]}\subseteq A^{ij}_\varepsilon\;\; or\;\; R^{[2]}\cap A^{ij}_\varepsilon = \emptyset.$
\end{description} 
where recall that $R^{[2]}$ denotes the set $R^{[2]}=\{(a,b)\in R^2:\, a\neq b\}$.

Given $a\in P$ and two different indices $i,j\in\{0,\ldots, n-1\}$ let us consider the following pointwise neighborhoods
\begin{eqnarray*}
\mathscr{U}_j(g^i_a)=\{f\in K:\, f(x^{ij}_a)\in J^{ij}\}.
\end{eqnarray*}
Note that the previous sets satisfy the following properties, as a direct consequence of \textbf{(15)} and \textbf{(13)}:
\begin{description}
\item[(19)] $\mathscr{U}_j(g^i_a)\cap\mathscr{U}_i(g^j_a)=\emptyset$ for every $a\in R$ and different indices $i,j\in\{0,\ldots, n-1\}$.
\item[(20)] If $a,b\in R$ and $i,j\in\{0,\ldots, n-1\}$ are different points and indices respectively, then either $g^k_b\in\mathscr{U}_j(g^i_a)$ for every $k\in\{0,\ldots, n-1\}$ or $g^k_b\notin \mathscr{U}_j(g^i_a)$ for every $k\in\{0,\ldots, n-1\}$.
\end{description}
Moreover, the Ramsey-like dichotomy \textbf{(18)} implies the following facts:
\begin{prop}\label{alternative}
Given $i,j\in\{0,\ldots, n-1\}$ two distinct indices, for every $a\in R$ we have
\begin{description}
\item[(21)] $\mathscr{U}_j(g^i_a)$ either contains or is disjoint from $\{g^k_b:\, k\in\{0,\ldots, n-1\}, b\in R, b<a\}$.
\item[(22)] $\mathscr{U}_j(g^i_a)$ either contains or is disjoint from $\{g^k_b:\, k\in\{0,\ldots,n-1\}, b\in R, a<b\}$.
\end{description}
\end{prop}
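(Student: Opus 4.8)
The plan is to deduce both (21) and (22) directly from the Ramsey-type dichotomy \textbf{(18)}, using property \textbf{(13)} to rewrite membership in the pointwise neighbourhood $\mathscr{U}_j(g^i_a)$ as membership of a pair in one of the analytic relations $A^{ij}_\varepsilon$. The key observation is that, by definition, $g^k_b\in\mathscr{U}_j(g^i_a)$ if and only if $g^k_b(x^{ij}_a)\in J^{ij}$, and for distinct $a,b\in R$ property \textbf{(13)} makes the value $g^k_b(x^{ij}_a)$ independent of the index $k$; this is exactly what is recorded in \textbf{(20)}. Hence, for each fixed pair $a\neq b$, the whole set $\{g^0_b,\ldots,g^{n-1}_b\}$ lies either entirely inside or entirely outside $\mathscr{U}_j(g^i_a)$, and the only remaining task is to show that this alternative does not depend on the choice of $b$ on each fixed side of $a$.

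To prove (21), fix $a\in R$ and take any $b\in R$ with $b<a$. By the previous remark, $g^k_b\in\mathscr{U}_j(g^i_a)$ holds for all $k$ if and only if $g^i_b(x^{ij}_a)\in J^{ij}$. Since $b=\min\{a,b\}$ and $a=\max\{a,b\}$, this is precisely the defining condition for $(a,b)\in A^{ij}_0$ given in \textbf{(16)}. As $(a,b)\in R^{[2]}$, the dichotomy \textbf{(18)} for $\varepsilon=0$ applies: if $R^{[2]}\subseteq A^{ij}_0$, then $g^k_b\in\mathscr{U}_j(g^i_a)$ for every such $b$ and every $k$, so $\mathscr{U}_j(g^i_a)$ contains the set $\{g^k_b:k\in\{0,\ldots,n-1\},\,b\in R,\,b<a\}$; if instead $R^{[2]}\cap A^{ij}_0=\emptyset$, then $\mathscr{U}_j(g^i_a)$ is disjoint from it.

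For (22) the argument is identical after swapping the roles of minimum and maximum. Now $b>a$, so $a=\min\{a,b\}$ and $b=\max\{a,b\}$, and the reduction via \textbf{(13)} again gives that membership of the whole set $\{g^0_b,\ldots,g^{n-1}_b\}$ in $\mathscr{U}_j(g^i_a)$ is equivalent to $g^i_b(x^{ij}_a)\in J^{ij}$, which by \textbf{(17)} is exactly the condition $(a,b)\in A^{ij}_1$. Applying \textbf{(18)} for $\varepsilon=1$ yields that $\mathscr{U}_j(g^i_a)$ either contains or is disjoint from $\{g^k_b:k\in\{0,\ldots,n-1\},\,b\in R,\,a<b\}$.

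I expect the only delicate point to be the bookkeeping: matching the $\min/\max$ conventions of \textbf{(16)}--\textbf{(17)} with the fixed base point $a$ of the neighbourhood $\mathscr{U}_j(g^i_a)$, and confirming that the reduction via \textbf{(13)} replaces $g^k_b(x^{ij}_a)$ by the specific value $g^i_b(x^{ij}_a)$ carrying the same superscript $i$ as the relation $A^{ij}_\varepsilon$. Once these identifications are set up correctly, both (21) and (22) are immediate consequences of the colouring \textbf{(18)}, and no further estimates are required.
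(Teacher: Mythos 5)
Your proof is correct and follows essentially the same route as the paper: both reduce membership $g^k_b\in\mathscr{U}_j(g^i_a)$ to the condition $g^i_b(x^{ij}_a)\in J^{ij}$ via \textbf{(13)}/\textbf{(20)}, identify that condition with $(a,b)\in A^{ij}_0$ (for $b<a$) or $(a,b)\in A^{ij}_1$ (for $b>a$), and then invoke the dichotomy \textbf{(18)} with $\varepsilon=0$ and $\varepsilon=1$ respectively. Your min/max bookkeeping matches the paper's conventions exactly, so nothing is missing.
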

\begin{proof}
Let us prove first \textbf{(21)}. Suppose that the first alternative of \textbf{(18)} holds when we take $\varepsilon=0$. Given any $b\in R, b<a,$ then $(a,b)\in A^{ij}_0$, which means that $g^i_b\in\mathscr{U}_j(g^i_a)$ and thus due to \textbf{(20)} $\{g^0_b,\ldots, g^{n-1}_b\}\subset \mathscr{U}_j(g^i_a)$. As $b$ was chosen arbitrarily, the first possibility occurs. The second case is deduced  in the same way from the second alternative of \textbf{(18)} and \textbf{(20)}. The second statement \textbf{(22)} is proved in the same way taking $\varepsilon=1$ in \textbf{(18)}.
\end{proof}
Let us consider the following definition:
\begin{defi}
Given any point $a\in R$ and indices $i,k\in\{0,\cdots, n-1\}$, we say that the function $g^i_a$ is isolated from those functions which are on its left side by the neighbourhood $\mathscr{U}_k(g^i_a)$ if 
$$\mathscr{U}_k(g^i_a)\cap\{g^l_b:\, l\in\{0,\ldots, n-1\}, b\in R, b<a\}=\emptyset.$$ 
In the same way we define when a function is isolated of those functions which are on its right side.
\end{defi}
\begin{prop}
For all but exactly one index $i\in\{0,\ldots, n-1\}$ the following holds: For every $a\in R$ the function $g^i_a$ is isolated from those which are on its left side by one of the neighbourhoods $\mathscr{U}_k(g^i_a)$. The analogous statement holds for the right side.
\end{prop}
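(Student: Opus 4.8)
The plan is to recast the notion of being ``isolated from the left'' in terms of the Ramsey dichotomy \textbf{(18)} and then to show that it can fail for exactly one index. First I would observe that, for $b<a$ in $R$ and $i\neq k$, property \textbf{(13)} makes the value $g^l_b(x^{ik}_a)$ independent of $l$, so that $g^l_b\in\mathscr{U}_k(g^i_a)$ holds (for some, equivalently every, $l$) if and only if $(b,a)\in A^{ik}_0$. Hence, by \textbf{(18)}, for each ordered pair $i\neq k$ exactly one alternative occurs, uniformly in $a$: either $R^{[2]}\cap A^{ik}_0=\emptyset$, in which case $\mathscr{U}_k(g^i_a)$ isolates $g^i_a$ from the left for every $a\in R$; or $R^{[2]}\subseteq A^{ik}_0$, in which case $\mathscr{U}_k(g^i_a)$ isolates $g^i_a$ from the left for no $a$ possessing points of $R$ to its left. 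Consequently the set $F$ of indices $i$ that are isolated from the left by none of the neighbourhoods $\mathscr{U}_k(g^i_a)$ does not depend on $a$, and the statement to be proved amounts exactly to $|F|=1$.

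Next I would exhibit a member of $F$ by a sequential-compactness argument mimicking the case $n=2$ in \cite{Tod}. Fix $a_0\in R$ that is a limit from the left of $R$; such a point exists because all but countably many points of the perfect set $R$ are two-sided limits. Pick $b_m\in R$ with $b_m<a_0$ and $b_m\to a_0$. Since $K$ is a Rosenthal compactum it is sequentially compact, so after passing to a subsequence we may assume $g^0_{b_m}\to g$ in $K$. By \textbf{(12)} and the continuity of $a\mapsto h_a$ one gets $g\upharpoonright D_0=h_{a_0}$, whence $g=g^{i^*}_{a_0}$ for some index $i^*$, using that $\pi_{D_0}^{-1}(h_{a_0})=\{g^0_{a_0},\dots,g^{n-1}_{a_0}\}$. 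Thus $g^{i^*}_{a_0}$ is a cluster point from the left of the left-functions.

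The heart of the argument is to show that $i^*$ is the \emph{only} member of $F$. On the one hand, for every $k\neq i^*$ properties \textbf{(11)} and \textbf{(14)} make $\mathscr{U}_k(g^{i^*}_{a_0})$ an open neighbourhood of $g^{i^*}_{a_0}$; since $g^0_{b_m}\to g^{i^*}_{a_0}$ this neighbourhood meets $\{g^l_b:b<a_0\}$, so it does not isolate $g^{i^*}_{a_0}$ from the left, and therefore $i^*\in F$. On the other hand, fix $i\neq i^*$ and consider $\mathscr{U}_{i^*}(g^i_{a_0})$. If this set failed to isolate $g^i_{a_0}$ from the left, the dichotomy would force $g^l_b(x^{ii^*}_{a_0})\in J^{ii^*}$ for every $b<a_0$; but $g^0_{b_m}(x^{ii^*}_{a_0})\to g^{i^*}_{a_0}(x^{ii^*}_{a_0})\in\overline{I^{i^*i}}\subseteq J^{i^*i}$ by \textbf{(11)} and \textbf{(14)}, while $J^{i^*i}\cap J^{ii^*}=\emptyset$ by \textbf{(15)}, a contradiction. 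Hence $\mathscr{U}_{i^*}(g^i_{a_0})$ does isolate $g^i_{a_0}$ from the left, so $i\notin F$. Combining the two parts gives $F=\{i^*\}$, i.e. the statement holds for exactly the $n-1$ indices different from $i^*$. The statement for the right side then follows verbatim, replacing $A^{ij}_0$ by $A^{ij}_1$ and using a point $a_0$ that is a limit from the right.

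I expect the third paragraph to be the main obstacle, since it is where property \textbf{(13)} (all left-functions share a value at the separating points attached to $a_0$), the open separation \textbf{(15)} and the dichotomy \textbf{(18)} must be combined to pin down the cluster index uniquely. A secondary point requiring care is the reading of the statement at the countably many points of $R$ with no left neighbour, where every $g^i_a$ is vacuously isolated from the left: this is harmless, because the failure of the property for $i^*$ need only be witnessed at a single left-limit point $a_0$, whereas for each $i\neq i^*$ the witnessing neighbourhood works simultaneously for \emph{all} $a$.
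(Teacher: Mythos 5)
Your proof is correct and takes essentially the same route as the paper: the exceptional index is produced by an accumulation argument at a left-limit point of $R$ (using \textbf{(12)} and the continuity of $a\mapsto h_a$), and uniqueness comes from the uniformity of the Ramsey dichotomy \textbf{(18)} combined with the disjointness of the intervals $J^{ij}$ and $J^{ji}$. The only difference is organizational: the paper rules out two failing indices abstractly via \textbf{(19)}, while you pin down each $i\neq i^*$ by evaluating the convergent sequence at $x^{ii^*}_{a_0}$; the mathematical content is the same.
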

\begin{proof}
Suppose there were $i,j$ two different indices failing the  property stated in the proposition. 
By Proposition \ref{alternative}, for every $a\in R$ and for every $l\in\{0,\ldots, n-1\}$
\begin{eqnarray*}
\mathscr{U}_{l}(g^i_a)\supset \{g^k_b:k\in\{0,\ldots, n-1\},\, b<a\},\\
\mathscr{U}_{l}(g^j_a)\supset \{g^k_b:k\in\{0,\ldots, n-1\},\, b<a\}.
\end{eqnarray*}
so in particular $\mathscr{U}_j(g^i_a)$ and $\mathscr{U}_{i}(g^j_a)$ are not disjoint for any $a\neq \min(R)$. A contradiction with \textbf{(19)}. It remains to show that at least one index $i$ must fail the statement of the proposition. Otherwise every function $g_a^i$ would be isolated from the functions on its left side. If we pick $a$ which is not isolated from the left in $R$, then since $b\mapsto h_b$ is continuous, $\{h_b : b\in R, b<a\}$ accumulates to $h_a$. By \textbf{(12)} this implies that $\{g_b^k : b\in R, b<a,k\in\{0,\ldots,n-1\}\}$ accumulates to some $g^i_a$, a contradiction. 
\end{proof}
Let $i$ and $j$ be the indices for which, for every $a\in R$, the functions $g^i_a$ and $g^j_a$ are not isolated from those which are on their left an right side respectively. As a consequence of the last proposition, we have the following two possibilities:
\begin{description}
\item[Case 1.] The indices are equal and thus, for every $a\in R$, the function $g^i_a$ is not isolated from the others by the neighbourhoods $\mathscr{U}_l(g^k_a)$ neither from the right nor from the left.
\item[Case 2.] The indices are different and thus, for every $a\in R$, the functions $g^i_a$ and $g^j_a$ are respectively isolated from those functions which are on their right and left side.
\end{description}
Let us see how \textbf{Case 1} and \textbf{Case 2} introduce, respectively, the Alexandroff $n-$plicate $D_n(2^\mathbb{N})$ and the $n-$Split Interval $S_n(I)$ inside $K$.
\\[1cm]
\textbf{Case 1.}: There exists, as we shall see immediately, a natural identification between the subspace $\{g^k_a: k\in\{0,\ldots, n-1\}, \, a\in R\}$ of $K$ and the Alexandroff $n-$plicate of the set $\{h_a:\;a\in R\}$. First of all, let us note that if we prove $D_n(\{h_a:\;a\in R\})\hookrightarrow K$ we will have proved $D_n(2^\mathbb{N})\hookrightarrow K$. Indeed, $R$ is a perfect Polish space so it contains copies of $2^\mathbb{N}$. Since the map $a\mapsto h_a$ is a homeomorphism then there exists an embedding $D_n(2^\mathbb{N})\hookrightarrow D_n(\{h_a:\;a\in R\})$ and thus $D_n(2^\mathbb{N})\hookrightarrow K$.
\\[0.2cm]
Consider the map 
$$\begin{array}{cccc}
\Phi:&D_n(\{h_a:\,a\in R\})&\longrightarrow &\{g^k_a:k\in\{0,\ldots, n-1\},\,a\in R\}\\
& (h_a,k)&\mapsto& 
g^{\psi(k)}_a

\end{array}
$$
where $\psi:\{0,\ldots,n-1\}\longrightarrow\{0,\ldots,n-1\}$ is a bijection with $\psi(0)=i$.
\\
Since all spaces involved are Rosenthal compact and hence Fr\'{e}chet-Urysohn, continuity can be checked sequentially. If $\{(h_{a_m}, k_m)\}_{m\in\mathbb{N},k_m\in\{0,\ldots,n-1\}}$ is a nontrivial convergent sequence to $(h_a,0)$, then $h_{a_m}$ converges to $h_a$ and then $a_m$ converges to $a$ since $a\mapsto h_a$ is a homeomorphism. Then, by \textbf{(12)} the sequence $\{g^{\psi(k_m)}_{a_m}\}_{m}$ must accumulate to some element of $\{g^0_a,\ldots, g^{n-1}_a\}$ but, in fact, the sequence must accumulate to $g^{i}_a$ since the other functions are isolated. Since $\Phi$ is a bijection and the domain is compact, we conclude that $\Phi$ is a homeormorphism.

\textbf{Case 2.}
Let us identify the perfect set $R\subset 2^\mathbb{N}$ with a perfect set in the unit interval $I$ through the standard embedding $2^\mathbb{N}\subset I$, that preserves the order. Now consider the map,
$$
\begin{array}{cccc}
\Psi:&S_n(R)&\longrightarrow &\{g^k_a:k\in\{0,\ldots, n-1\}, a\in R\}\\
& (a,k)&\mapsto& g^{\phi(k)}_a
\end{array}
$$
where $\phi:\{0,\ldots, n-1\}\longrightarrow\{0,\ldots,n-1\}$ is a bijection such that $\phi(0) = i$ and $\phi(1)=j$. We prove the continuity of $\Psi$ and  for the points of the form $(a,0)$ since the reasoning for the points $(a,1)$ is analogous.
Let us take a nontrivial convergent sequence $\{(a_m,k_m)\}_{m\in\mathbb{N}, k_m\in\{0,\ldots, n-1\}}$ to $(a,0)$. Then we know that $a_m$ is eventually lower than the point $a$. Arguing as in the \textbf{Case 1}, the sequence $\{g^{\phi(k_m)}_{a_m}\}_m$ must accumulate to some element of $\{g^0_a,\ldots, g^{n-1}_a\}$ and since the unique non isolated functions are $g^i_a$ and $g^j_a$ the sequence must accumulate to one of them. But, $g^j_{a}$ is isolated from the functions which are on its left side so, in fact, the sequence must accumulate to $g^i_a$ and then $\Psi$ is continuous. Again, since $\Psi$ is a bijection and the domain is compact, we conclude that $\Psi$ is a homeormorphism. Finally, since $S_n(R)\hookrightarrow K$ it is enough to apply Theorem \ref{PropertiesS_n} to obtain the desired embedding $S_n(I)\hookrightarrow K$.

\section*{Acknowledgements}

The first author was supported by MINECO and FEDER (MTM2014-54182-P) and by Fundaci\'{o}n S\'{e}neca - Regi\'{o}n de Murcia (19275/PI/14). The third author is partially supported by grants from NSERC and CNRS.\\

\textsc{Antonio Avil\'{e}s and Alejandro Poveda}

\textsc{Universidad de Murcia, Departamento de Matem\'{a}ticas, Campus de Espinardo 30100 Murcia, Spain.}

\texttt{avileslo@um.es, alejandro.poveda1@um.es}\\

\textsc{Stevo Todorcevic}

\textsc{Department of Mathematics, University of Toronto, Toronto, Canada, M5S 3G3.}

\textsc{Institut de Math\'{e}matiques de Jussieu, CNRS UMR 7586, Case 247, 4 place Jussieu, 75252 Paris Cedex, France}

\texttt{stevo@math.toronto.edu,stevo.todorcevic@imj-prg.fr}

\bibliographystyle{alpha} 
\bibliography{bibliography}

\end{document}